\numberwithin{equation}{section}
\newtheorem{theorem}{Theorem}[section]
\newtheorem{corollary}[theorem]{Corollary}
\newtheorem{lemma}[theorem]{Lemma}
\newtheorem{proposition}[theorem]{Proposition}
\newtheorem{conjecture}[theorem]{Conjecture}
\newtheorem{definition}[theorem]{Definition}
\theoremstyle{remark}
\newtheorem{remark}[theorem]{Remark}
\theoremstyle{remark}
\DeclareMathOperator{\diam}{diam}
\newcommand{\apmd}[2][]{							
	\ifthenelse{\equal{#1}{}}%
					{ \operatorname{N}_{#2}	}%
					{ \operatorname{N}_{#1,#2} 	}}
\newcommand{\aint}[2][]{
	\ifthenelse{\equal{#1}{}}%
					{%
\mathchoice%
      {\mathop{\kern 0.2em\vrule width 0.6em height 0.69678ex depth -0.58065ex
              \kern -0.8em \intop}\nolimits_{\kern -0.45em#2}^{#1}}%
      {\mathop{\kern 0.1em\vrule width 0.5em height 0.69678ex depth -0.60387ex
              \kern -0.6em \intop}\nolimits_{#2}^{#1}}%
      {\mathop{\kern 0.1em\vrule width 0.5em height 0.69678ex depth -0.60387ex
              \kern -0.6em \intop}\nolimits_{#2}^{#1}}%
      {\mathop{\kern 0.1em\vrule width 0.5em height 0.69678ex depth -0.60387ex
              \kern -0.6em \intop}\nolimits_{#2}^{#1}}}%
					{%
\mathchoice%
      {\mathop{\kern 0.2em\vrule width 0.6em height 0.69678ex depth -0.58065ex                                              
              \kern -0.8em \intop}\nolimits_{\kern -0.45em#1}^{#2}}%
      {\mathop{\kern 0.1em\vrule width 0.5em height 0.69678ex depth -0.60387ex
              \kern -0.6em \intop}\nolimits_{#1}^{#2}}%
      {\mathop{\kern 0.1em\vrule width 0.5em height 0.69678ex depth -0.60387ex
              \kern -0.6em \intop}\nolimits_{#1}^{#2}}%
      {\mathop{\kern 0.1em\vrule width 0.5em height 0.69678ex depth -0.60387ex
              \kern -0.6em \intop}\nolimits_{#1}^{#2}}}}
\begin{document}
\title[Removability of singularies]{Quasiregular curves: Removability of singularities}

\author{Toni Ikonen} 
\address{Department of Mathematics and Statistics, P. O. Box 68 (Pietari Kalmin katu 5), FI-00014 University of Helsinki, Finland}

\email{toni.ikonen@helsinki.fi}

\keywords{calibration, curve, pseudoholomorphic, quasiregular, removability, singularity}
\thanks{This work was supported by the Academy of Finland, project number 332671. \newline {\it 2020 Mathematics Subject Classification.} Primary: 30C65. Secondary: 49Q15, 53C65}


\begin{abstract}
We prove a Painlevé theorem for bounded quasiregular curves in Euclidean spaces extending removability results for quasiregular mappings due to Iwaniec and Martin. The theorem is proved by extending a fundamental inequality for volume forms to calibrations and proving a Caccioppoli inequality for quasiregular curves.

We also establish a qualitatively sharp removability theorem for quasiregular curves whose target is a Riemannian manifold with sectional curvature bounded from above and injectivity radius bounded from below. As an application, we extend a theorem of Bonk and Heinonen for quasiregular mappings to the setting of quasiregular curves: every non-constant quasiregular $\omega$-curve from $\mathbb{R}^n$ into $( N, \omega )$, where the bounded cohomology class of $\omega$ is in the bounded Künneth ideal, has infinite energy.
\end{abstract}

\maketitle\thispagestyle{empty}

\section{Introduction}\label{sec:intro}

A continuous mapping $f \colon M \rightarrow N$ between oriented Riemannian $n$-manifolds is a \emph{$K$-quasiregular mapping} if
\begin{equation}\label{eq:quasiregular}
    f \in W^{1,n}_{loc}( M, N )
    \quad\text{and}\quad
    \| Df \|^n \leq K \star f^{\star}\mathrm{vol}_{N}
    \quad\text{almost everywhere};
\end{equation}
here $\|Df\|$ is the operator norm of the weak differential of $f$, $f^{\star}\mathrm{vol}_N$ is the pullback of the Riemannian volume form of $N$, and $\star f^{\star}\mathrm{vol}_{N}$ its Hodge star dual. We recall that $\star f^{\star}\mathrm{vol}_N$ is the Jacobian of $f$.

Similarly, a continuous mapping $f \colon M \rightarrow N$ from an oriented surface into a Kähler manifold, with symplectic form $\omega_{sym}$, is a \emph{holomorphic curve} if
\begin{equation}\label{eq:holomorphic}
    f \in W^{1,2}_{loc}( M, N )
    \quad\text{and}\quad
    \| Df \|^2 = \star f^{\star}\omega_{sym}
    \quad\text{almost everywhere}.
\end{equation}
Recall that $d\omega_{sym} = 0$ and $\omega_{sym}(x) \neq 0$ for every $x \in N$.

Motivated by the similarity between \eqref{eq:quasiregular} and \eqref{eq:holomorphic}, Pankka coined the term \emph{quasiregular curve} in \cite{Pan:20}: Consider an oriented Riemannian $m$-manifold and $\omega \in \Omega^{n}( N )$ for $2 \leq n \leq m$  such that $\omega$ is closed. Then a continuous $f \colon M \rightarrow N$ from an oriented Riemannian $n$-manifold $M$ is a \emph{$K$-quasiregular $\omega$-curve} if
\begin{equation}\label{eq:quasiregular:curve}
    f \in W^{1,n}_{loc}( M, N )
    \quad\text{and}\quad
    ( \| \omega \| \circ f ) \| Df \|^n \leq K \star f^{\star}\omega
    \quad\text{almost everywhere}.
\end{equation}
Here
\begin{equation*}
    \| \omega \|( x )
    =
    \sup
    \left\{
        \omega( v_1 \wedge \dots \wedge v_n )
        \colon
        ( v_1, \dots, v_n )
        \text{ are orthonormal in $T_x N$}
    \right\}
\end{equation*}
is the \emph{comass} of $\omega$. In \eqref{eq:quasiregular} and \eqref{eq:holomorphic}, the comasses of the volume form and the symplectic forms are one. Further examples of interest for $\omega$ include the calibrations in the sense of \cite{Har:Law:82} and closed forms $\omega \in \Omega^{n}(N)$ when $N$ is compact. We mainly consider \emph{bounded} (i.e. $\sup \|\omega\| < \infty$) and \emph{strongly nonvanishing} (i.e. $\inf \|\omega\| > 0$) closed forms.

\subsection{Painlevé problem for quasiregular curves}\label{sec:painleve}
A closed set $E \subset \mathbb{C}$ is \emph{removable for bounded holomorphic maps} if for every open set $U \subset \mathbb{C}$ and every bounded holomorphic map $\phi \colon U \setminus E \rightarrow \mathbb{C}$, there is a holomorphic map $\Phi \colon U \to \mathbb{C}$ for which $\Phi|_{ U \setminus E } = \phi$. The Painlevé problem asks for necessary and sufficient geometric conditions on $E$ that guarantee the removability of $E$ for bounded holomorphic maps. The problem has been resolved by Tolsa \cite{Tol:03} in terms of curvatures of measures. Similar problems have been considered for quasiregular mappings. In fact, Iwaniec and Martin conjectured the following for quasiregular mappings.
\begin{conjecture}[\cite{Iwa:Mar:93}]\label{conj:remov:bounded}
Sets of Hausdorff $d$-measure zero, $d = n/(1+K) \leq n/2$, are removable under bounded $K$-quasiregular mappings. 
\end{conjecture}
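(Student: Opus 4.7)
The plan is the classical two-step removability strategy that underlies the known partial results. First, I would show that the restriction $f \colon U \setminus E \to \mathbb{R}^n$ extends to a map in $W^{1,n}_{\loc}( U, \mathbb{R}^n )$ by controlling its Dirichlet energy near $E$; second, I would invoke Reshetnyak's theorem that a Sobolev map satisfying \eqref{eq:quasiregular} almost everywhere is continuous (and thus genuinely $K$-quasiregular), so the extension is automatically quasiregular on all of $U$.

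The analytic engine of the first step is a sharp Caccioppoli inequality: for any Lipschitz cutoff $\eta$ compactly supported in $U \setminus E$ and any $c \in \mathbb{R}^n$,
\begin{equation*}
    \int_U \eta^n \|Df\|^n \, \df\mathcal{H}^n \leq C(K,n) \int_U |\nabla \eta|^n |f - c|^n \, \df\mathcal{H}^n.
\end{equation*}
Since $f$ is bounded, the right-hand side is controlled by $\|f\|_\infty^n \int |\nabla \eta|^n$. I would then construct cutoffs $\eta_\varepsilon$ from a Whitney-type cover of $E$ by balls $B(x_i, r_i)$ with $\sum r_i^d \to 0$ (allowed since $\mathcal{H}^d(E) = 0$), using standard capacity-test functions on each ball. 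The Caccioppoli inequality converts $\mathcal{H}^d$-smallness of $E$ into $n$-capacity smallness, and hence into vanishing of the limiting boundary contribution, provided the exponents match.

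To reach the critical threshold $d = n/(1 + K)$, one cannot use \eqref{eq:quasiregular} directly; one must upgrade the Caccioppoli inequality via higher integrability. I would therefore appeal to the (conjectured) sharp self-improvement $\|Df\| \in L^{p}_{\loc}$ with Iwaniec's exponent $p = Kn/(K-1)$. Inserting this into a weighted Caccioppoli estimate and applying Hölder, the relevant capacity becomes a $p$-capacity, and the scaling $n - n/p = n/(1+K)$ matches exactly the Hausdorff dimension in the conjecture. Standard Frostman-type comparison then turns $\mathcal{H}^{n/(1+K)}(E) = 0$ into vanishing $p$-capacity of $E$, completing the energy bound and hence the $W^{1,n}$-extension.

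The main obstacle, and the reason the conjecture has stood open, is the sharp higher integrability hypothesis $p = Kn/(K-1)$: this is Iwaniec's conjecture, proved in dimension $n = 2$ by Astala's area-distortion theorem but open for $n \geq 3$. Without it, the Caccioppoli-plus-capacity argument reaches only a strictly smaller dimension than $n/(1+K)$. A secondary obstacle is the critical scaling: even granting the conjectured integrability, one needs a borderline-sharp capacity estimate at the endpoint exponent, which requires either a refined Hausdorff-measure-to-capacity comparison with gauge functions or an absorption argument that exploits the distortion $K$ more delicately than a direct Hölder step allows.
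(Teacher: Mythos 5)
This statement is a conjecture attributed to Iwaniec and Martin; the paper does not prove it (and could not be expected to): it only records it as the conjectural sharp target, proves the weaker \Cref{thm:remov:bounded:curve} with the extra factor $\lambda(n)\binom{m}{n}$, and notes that the known \Cref{thm:remov:bounded} would match the conjectured dimension exactly if $\lambda(n)=1$, which is itself open. So your honest assessment that no complete proof is available is consistent with the paper, and your first step (Caccioppoli inequality plus capacity cutoffs, then upgrade the $W^{1,n}_{\loc}$ extension to a quasiregular map) is indeed the skeleton of the partial results.

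However, the mechanism you propose for reaching the critical dimension $n/(1+K)$ has a concrete flaw. You invoke the sharp higher integrability \emph{above} the natural exponent, $p = Kn/(K-1)$, and claim the boundary term is then controlled by a $p$-capacity with the scaling $n - n/p = n/(1+K)$. That arithmetic is false ($n - n/p = n-(K-1)/K$), and, more fundamentally, for $p > n$ every nonempty set has positive Sobolev $p$-capacity, so a $p$-capacity smallness argument at that exponent cannot remove anything. The route that actually yields the critical dimension---and the one behind \Cref{thm:remov:bounded} and the paper's \Cref{thm:remov:bounded:curve}---runs \emph{below} the natural exponent: one needs the Caccioppoli inequality and self-improving regularity for very weak solutions at exponents $s$ down to $q(K,n,1) = K\lambda(n) n/(K\lambda(n)+1)$ (\Cref{thm:caccioppoli:estimate}, \Cref{thm:integrationbyparts}); then $\mathrm{dim}_{\mathcal{H}}(E) < n - s$ gives zero Sobolev $s$-capacity, the cutoff argument shows the bounded map lies in $W^{1,s}_{\loc}$ across $E$, and the self-improvement restores $W^{1,n}_{\loc}$ and hence quasiregularity. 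The open sharp ingredient is therefore $\lambda(n)=1$ (sharpness of the lower endpoint $q$), which is the dual side of the conjectured interval rather than the upper Iwaniec exponent you cite; with $\lambda(n)=1$ one gets $n-q = n/(1+K)$, and even then only the strict dimension inequality, so the endpoint statement of the conjecture (zero $\mathcal{H}^{d}$-measure at $d=n/(1+K)$) would still require the kind of borderline refinement you correctly flag as a secondary obstacle.
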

A closed set $E \subset \mathbb{R}^n$ is \emph{removable for bounded $K$-quasiregular mappings} if for every open $U \subset \mathbb{C}$ and every bounded $K$-quasiregular $f \colon U \setminus E \rightarrow \mathbb{C}$, there is a $K$-quasiregular $\Phi \colon U \rightarrow \mathbb{C}$ for which $\Phi|_{ U \setminus E } = \phi$.

\Cref{conj:remov:bounded} is well-understood when $n = 2$. Indeed, bounded $1$-quasiregular mappings coincide with holomorphic mappings by Weyl's lemma, cf. \cite[p. 27]{Ast:Iwa:Mar:09}, so \Cref{conj:remov:bounded} is related to the Painlevé problem above. In regards to the Painlevé problem, sets of zero length are known to be removable for holomorphic mappings while sets whose Hausdorff dimension is strictly bigger than one are never removable (see \cite[p.371]{Dav:98}). When $K > 1$, it is known that sets with $\sigma$-finite $2/(1+K)$-dimensional Hausdorff measure are removable for bounded $K$-quasiregular mappings, cf. \cite[Theorem 1.2]{Ast:Clo:Oro:UT:08}.  We refer to the introduction of \cite{Ast:Clo:Oro:UT:08} for further background.

We now move to higher dimensions and recall the following theorem due to Iwaniec and Martin.
\begin{theorem}[Theorem 17.3.1, \cite{Iw:Ma:01}]\label{thm:remov:bounded}
There is $\lambda(n) \geq 1$ such that if $E \subset \mathbb{R}^n$ is closed and the Hausdorff dimension satisfies
\begin{equation}\label{eq:hausdorffbound:prev}
    \mathrm{dim}_{ \mathcal{H} }( E )
    <
    \frac{ n }{ 1 + K \lambda(n) },
\end{equation}
then $E$ is removable for bounded $K$-quasiregular mappings.
\end{theorem}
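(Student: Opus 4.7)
The plan is to follow the classical Iwaniec--Martin template: combine a Caccioppoli inequality below the natural exponent $n$ with a capacity-based cutoff argument to push $\|Df\|$ into $L^{n}_{\mathrm{loc}}(U)$ across $E$, and then verify that the resulting continuous extension $\Phi\colon U\to\mathbb{R}^n$ still satisfies the quasiregularity inequality \eqref{eq:quasiregular}.

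The first step is to invoke Iwaniec's higher integrability theorem for $K$-quasiregular mappings: there exist $\delta=\delta(n,K)>0$ and $\lambda(n)\geq 1$ such that every $K$-quasiregular $f\colon U\setminus E\to\mathbb{R}^n$ satisfies the Caccioppoli-type estimate
\begin{equation*}
    \int \|Df\|^{p}\,\eta^{p}\,dx
    \leq
    C(n,K,p)\int |f-c|^{p}\,|\nabla\eta|^{p}\,dx
\end{equation*}
for every $\eta\in C_{c}^{\infty}(U\setminus E)$, every constant $c\in\mathbb{R}^n$, and every $p$ in an interval $\bigl(n/(1+K\lambda(n)),\,n+\delta\bigr)$. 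The constant $\lambda(n)$ is precisely the quantitative measure of how far below $n$ this reverse-Hölder-type estimate persists, and is where the hypothesis \eqref{eq:hausdorffbound:prev} originates.

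The second step exploits the Hausdorff dimension bound. Since $\mathrm{dim}_{\mathcal{H}}(E)<n/(1+K\lambda(n))$, I would fix $p>n/(1+K\lambda(n))$ for which the Caccioppoli estimate above is still valid and, simultaneously, for which the $(n-p)$-Hausdorff measure of $E$ vanishes. Standard Frostman-type capacity estimates then produce a sequence of cutoffs $\eta_j\in C_{c}^{\infty}(U)$ with $\eta_j\to 1$ off a shrinking open neighborhood of $E$, $0\leq \eta_j\leq 1$, and $\int |\nabla\eta_j|^{p}\,dx\to 0$. Inserting $\eta_j$ into the Caccioppoli estimate (with $c=0$, using that $f$ is bounded) yields $\|Df\|\in L^{p}_{\mathrm{loc}}(U)$. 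Bootstrapping via Iwaniec's reverse Hölder inequality on small balls then upgrades this to $\|Df\|\in L^{n}_{\mathrm{loc}}(U)$. Because $U\setminus E$ is dense in $U$ and $f$ is bounded, the continuous extension $\Phi$ is well-defined, its distributional derivative $D\Phi$ coincides almost everywhere with $Df$, and the pointwise inequality $\|Df\|^{n}\leq K\star f^{\star}\mathrm{vol}$ passes to $\Phi$.

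The main obstacle is the quantitative calibration in the second step: the exponent $p$ must simultaneously lie in the Caccioppoli range of the first step and force the $p$-capacity of $E$ to vanish. This is exactly the role of $\lambda(n)$, and it is why the conjectured sharp bound $d=n/(1+K)$ in \Cref{conj:remov:bounded} would reduce to knowing the sharp exponent in Iwaniec's higher integrability theorem ($\lambda(n)=1$). A secondary technical point is ensuring continuity of $\Phi$ across $E$; this follows from the Sobolev embedding once $D\Phi\in L^{p}$ with $p$ sufficiently close to $n$, combined with boundedness of $f$ and the fact that $E$ has Hausdorff dimension strictly smaller than $n$.
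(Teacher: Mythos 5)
Your overall architecture is the right one and matches how the paper proves the curve analogue (\Cref{thm:remov:bounded:curve}): a Caccioppoli inequality below the natural exponent, cutoffs with small $L^{s}$-gradient coming from the Hausdorff-dimension/capacity hypothesis, lower semicontinuity to conclude $f\in W^{1,s}_{loc}(U)$ across $E$, and then the Gehring-type bootstrap (\Cref{thm:integrationbyparts}) to climb above $n$, obtain a continuous representative, and verify the distortion inequality almost everywhere for the extension. However, the quantitative heart of your argument --- which you yourself single out as the main obstacle --- is mis-calibrated. The Caccioppoli/higher-integrability estimate for $K$-quasiregular mappings is valid on the interval $\bigl(\tfrac{K\lambda(n)}{K\lambda(n)+1}n,\ \tfrac{K\lambda(n)}{K\lambda(n)-1}n\bigr)$, not on $\bigl(\tfrac{n}{1+K\lambda(n)},\,n+\delta\bigr)$ as you state: the number $\tfrac{n}{1+K\lambda(n)}$ is $n$ \emph{minus} the lower critical exponent, i.e.\ it is the removability dimension threshold, not the exponent itself (the two coincide only when $K\lambda(n)=1$, which is presumably the source of the slip). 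As written, your instruction ``fix $p>n/(1+K\lambda(n))$ for which the Caccioppoli estimate above is still valid'' rests on an estimate that is not available in that range; indeed, if Caccioppoli held down to $n/(1+K\lambda(n))$, your argument would prove removability for all $E$ with $\dim_{\mathcal H}(E)<\tfrac{K\lambda(n)}{1+K\lambda(n)}n$, far beyond the statement, and the corresponding regularity below the true critical exponent is ruled out by the examples of Iwaniec and Martin cited after \Cref{cor:liouville}.

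The repair is exactly the calibration used in the paper's proof of \Cref{thm:remov:bounded:curve}: choose $s$ with $\dim_{\mathcal H}(E)<n-s<\tfrac{n}{1+K\lambda(n)}$, so that $s$ lies in the genuine Caccioppoli range and simultaneously $E$ has zero Sobolev $s$-capacity; the hypothesis \eqref{eq:hausdorffbound:prev} is precisely what makes these two constraints compatible. Two smaller points. First, a uniform bound on $\int \eta_j^{s}\|Df\|^{s}$ gives $\|Df\|\in L^{s}_{loc}$ off $E$, but it does not by itself show that $Df$ is the distributional derivative of $f$ across $E$; you need the cutoff sequence together with lower semicontinuity (equivalently, $W^{1,s}$-removability from zero $s$-capacity), as in the paper's argument, to exclude a singular contribution supported on $E$. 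Second, continuity of the extension does not follow from Sobolev embedding with $p$ merely ``close to $n$''; you need the bootstrap to exceed $n$ (which it does, since the upper critical exponent $\tfrac{K\lambda(n)}{K\lambda(n)-1}n>n$) or the standard continuity theory for $W^{1,n}_{loc}$ solutions of the distortion inequality.
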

The number $\lambda(n)$ appears in \cite[Theorem 7.8.2]{Iw:Ma:01} and Iwaniec and Martin have conjectured that $\lambda(n) = 1$, see \cite[p. 164]{Iw:Ma:01}. If the equality $\lambda(n) = 1$ would hold, the upper bound in \Cref{thm:remov:bounded} would coincide with the critical Hausdorff dimension in \Cref{conj:remov:bounded}. The same constant appears in the following extension of \Cref{thm:remov:bounded}.
\begin{theorem}\label{thm:remov:bounded:curve}
Let $\omega \in \Lambda^{n} \mathbb{R}^m$ be a calibration, and let $\lambda(n) \geq 1$ be the constant from \Cref{thm:remov:bounded}. If $E \subset \mathbb{R}^n$ is closed and the Hausdorff dimension satisfies
\begin{equation}\label{eq:hausdorffbound}
    \mathrm{dim}_{ \mathcal{H} }( E )
    <
    \frac{ n }{ 1 + K \lambda(n) \binom{m}{n} },
\end{equation}
then every bounded $K$-quasiregular $\omega$-curve $f \colon U \setminus E \rightarrow \mathbb{R}^m$, for open $U \subset \mathbb{R}^n$, is the restriction of a $K$-quasiregular $\omega$-curve $F \colon U \rightarrow \mathbb{R}^m$.
\end{theorem}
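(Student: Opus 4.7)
The approach adapts the proof of \Cref{thm:remov:bounded} given by Iwaniec and Martin to the curve setting. Their argument combines three ingredients: a Caccioppoli inequality for bounded quasiregular mappings, a Gehring-type self-improvement of this estimate yielding higher integrability of $Df$, and a Sobolev removability theorem across closed sets of small Hausdorff dimension. The constant $\lambda(n)$ measures the loss of integrability as $K$ grows and, together with the capacity bound on $E$, determines the extension threshold.

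First I would extend the fundamental inequality for the Riemannian volume form used in \cite[Chapter 7]{Iw:Ma:01} to arbitrary calibrations $\omega \in \Lambda^n \mathbb{R}^m$. Writing $\omega = \sum_{|I| = n} \omega_I \, dx_I$ in the standard basis and estimating each pull-back term $\omega_I \, df^{i_1} \wedge \cdots \wedge df^{i_n}$ via the comass bound $\|\omega\| = 1$, one picks up the combinatorial factor $\binom{m}{n}$, the dimension of $\Lambda^n \mathbb{R}^m$. This factor propagates through the Caccioppoli constant into the extension threshold \eqref{eq:hausdorffbound}. Together with \eqref{eq:quasiregular:curve} this provides the reverse estimate on $\|Df\|^n$ needed next.

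Using closedness of $\omega$ to integrate by parts on $\int \varphi^n \star f^\star \omega$ against cutoffs $\varphi \in C_c^\infty(U \setminus E)$, I would then establish a Caccioppoli-type inequality
\[
\int (\|\omega\| \circ f) \|Df\|^n \varphi^n \, dx \leq C(n, K, \omega) \int |f - a|^n |D\varphi|^n \, dx
\]
for arbitrary $a \in \mathbb{R}^m$; boundedness of $f$ then gives quantitative control. Gehring's lemma upgrades $f$ to $W^{1,p}_{\mathrm{loc}}(U \setminus E)$ for some $p > n$ depending on $K$, $n$, and $\binom{m}{n}$. The Hausdorff dimension hypothesis is calibrated so that the resulting Sobolev removability theorem across $E$ applies, producing $F \in W^{1,n}_{\mathrm{loc}}(U, \mathbb{R}^m)$ that restricts to $f$ off $E$. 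Since $E$ has Lebesgue measure zero, \eqref{eq:quasiregular:curve} persists almost everywhere on $U$, and continuity of $F$ at points of $E$ follows from the higher integrability and a Morrey-type embedding.

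The main obstacle is the Caccioppoli inequality itself. Unlike the quasiregular-mapping setting, where $f^\star \mathrm{vol}_N$ is a Jacobian with a direct geometric meaning, here one must control $\|Df\|^n$ through the pull-back of a general calibration whose components couple nontrivially with $Df$. The fundamental inequality extension is what makes this possible, at the cost of the $\binom{m}{n}$ factor; removing this factor — and thereby matching the threshold of \Cref{thm:remov:bounded} — would presumably require a more geometric handling of the calibration in place of the componentwise estimate.
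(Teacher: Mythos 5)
There is a genuine gap at the extension step. Your argument runs entirely at or above the natural exponent: a Caccioppoli inequality at exponent $n$ with cutoffs $\varphi \in \mathcal{C}^\infty_c(U \setminus E)$, Gehring on cubes contained in $U \setminus E$, and then an appeal to ``the resulting Sobolev removability theorem'' to produce $F \in W^{1,n}_{loc}(U)$. But any removability theorem for Sobolev functions presupposes that $f$ already has finite $W^{1,n}$ (or $W^{1,p}$) energy locally in $U$, i.e.\ that $\int_{V\setminus E}\|Df\|^n<\infty$ for neighbourhoods $V$ of points of $E$. Nothing in your scheme delivers this: the exponent-$n$ Caccioppoli inequality only tests against functions compactly supported away from $E$ (the constant degenerates as the cutoff approaches $E$), and the Gehring improvement is a reverse H\"older inequality on cubes $2Q \subset U\setminus E$, so it gives no uniform bound up to $E$. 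Boundedness of $f$ in sup-norm does not control the energy near $E$; ruling out an energy blow-up at $E$ is precisely the content of the theorem. For the same reason the final claim that continuity of $F$ on $E$ follows ``from the higher integrability and a Morrey-type embedding'' is circular: the higher integrability you have is only on $U\setminus E$. Your sketch also does not explain why the threshold has the specific form \eqref{eq:hausdorffbound}; if a generic $W^{1,n}$-removability statement were all that is needed, any dimension below $n-1$ would do.

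The mechanism the paper uses (following Iwaniec--Martin) is different in exactly this respect: the Caccioppoli inequality must be proved for exponents $s$ \emph{strictly below} $n$, in the critical interval $(q(K,n,\|\omega\|_{\ell^1}),p(K,n,\|\omega\|_{\ell^1}))$ of \Cref{prop:caccioppoli:estimate}. The dimension bound \eqref{eq:hausdorffbound} is precisely $\dim_{\mathcal H}(E) < n - q(K,n,\binom{m}{n})$, so one can pick $s$ with $q_\omega < s < n$ and $\dim_{\mathcal H}(E)<n-s$; then $E$ has zero Sobolev $s$-capacity, and one tests the below-natural-exponent Caccioppoli inequality with $\phi_i=(1-\eta_i)\psi$, where $\eta_i\equiv 1$ near $E$ and $\|D\eta_i\|_{L^s}\to 0$. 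Boundedness of $f$ controls $\|f\otimes D\phi_i\|_{L^s}$ by $\|f\|_\infty\|D\phi_i\|_{L^s}$, and lower semicontinuity of energy yields $f\in W^{1,s}_{loc}(U)$ \emph{across} $E$. Only then does the self-improvement of \Cref{thm:integrationbyparts} (from an exponent below $n$ up to $p_\omega>n$, on all of $U$) give continuity, after which the distortion inequality persists a.e.\ because $E$ is Lebesgue-null. Your componentwise extension of the fundamental inequality with the $\binom{m}{n}$ factor is the right first ingredient, but without the sub-$n$ Caccioppoli estimate and the capacity cutoff argument the proof does not close.
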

The notation $\Lambda^{n} \mathbb{R}^m$ refers to $n$-covectors and the $n$-covectors are identified with \emph{constant-coefficient} $n$-forms.

The proof of \Cref{thm:remov:bounded:curve} has a similar strategy as the proof of \Cref{thm:remov:bounded}. To set the stage, we define the $\ell^1$-norm $\| \omega \|_{ \ell^1 } \coloneqq \sum_{ I } |\omega_I|$ of a calibration $\omega = \sum_{ I } \omega_I dx^{I}$ expressed in the standard basis of $n$-covectors $\Lambda^{n} \mathbb{R}^m$; the $\ell^1$-norm of a calibration satisfies $1 \leq \|\omega\|_{ \ell^1 } \leq \binom{m}{n}$ and the upper bound is related to the dimension bound \eqref{eq:hausdorffbound}. We prove a Caccioppoli inequality for quasiregular curves in the interval defined by the critical exponents
\begin{align}\label{eq:sharpexponents:intro}
    q(K,n,\|\omega\|_{ \ell^1 } )
    &=
    \frac{ K \lambda(n) \|\omega\|_{ \ell^1 }  }{ K \lambda(n) \|\omega\|_{ \ell^1 } + 1 }n
    \quad\text{and}\quad
    \\ \notag
    p(K,n,\|\omega\|_{ \ell^1 } ) &= \frac{ K \lambda(n) \|\omega\|_{ \ell^1 } }{ K \lambda(n) \|\omega\|_{ \ell^1 } - 1 }n.
\end{align}
More precisely, we have the following theorem.
\begin{theorem}[Caccioppoli inequality]\label{thm:caccioppoli:estimate}
Consider $K \geq 1$, a calibration $\omega \in \Lambda^{n} \mathbb{R}^m$, and $s \in ( q(K,n,\|\omega\|_{ \ell^1 } ), p(K,n,\|\omega\|_{ \ell^1 } ) )$. Suppose that $f \in W^{1,s}_{loc}( \Omega, \mathbb{R}^m )$ satisfies
\begin{equation*}
    \|Df\|^n
    \leq
    K \star f^{\star}\omega
    \quad\text{almost everywhere.}
\end{equation*}
Then, for every nonnegative $\phi \in \mathcal{C}^{\infty}_c( \Omega )$,
\begin{align*}
    \| \phi Df \|_{ L^{s}( \Omega, \mathbb{R}^{ m \times n } ) }
    &\leq
    C
    \| f \otimes D\phi \|_{ L^{s}( \Omega, \mathbb{R}^{ m \times n } ) }
\end{align*}
for a constant $C$ depending only on $s, n, \|\omega\|_{ \ell^1 }$ and $K$.
\end{theorem}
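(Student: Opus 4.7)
The plan is to adapt Iwaniec and Martin's proof of the Caccioppoli inequality for quasiregular mappings in \cite{Iw:Ma:01} to the calibration setting; the constant-coefficient structure of $\omega = \sum_{I} \omega_I\, dx^I$ is what allows the argument to go through, and the $\ell^1$-norm $\|\omega\|_{\ell^1}$ enters as the natural replacement for the implicit unit factor present in the volume-form case.

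The first step is to exploit exactness. For each multi-index $I = (i_1 < \dots < i_n)$ the identity
\begin{equation*}
df^{i_1} \wedge \dots \wedge df^{i_n} = d\bigl(f^{i_1}\, df^{i_2} \wedge \dots \wedge df^{i_n}\bigr)
\end{equation*}
presents $f^{\star}\omega$ as a sum of exact $n$-forms, interpreted distributionally and justified by mollification since $s$ may lie below $n$. Multiplying by $\phi^s$ and integrating by parts yields
\begin{equation*}
\int_{\Omega} \phi^{s}\, \star f^{\star}\omega\, dx = -s \sum_{I} \omega_I \int_{\Omega} \phi^{s-1}\, d\phi \wedge f^{i_1}\, df^{i_2} \wedge \dots \wedge df^{i_n}.
\end{equation*}
Combining this with the pointwise distortion $\|Df\|^{n} \leq K\, \star f^{\star}\omega$, the Hadamard-type bound $|df^{i_2} \wedge \dots \wedge df^{i_n}| \leq \|Df\|^{n-1}$, and $\sum_{I} |\omega_I| = \|\omega\|_{\ell^1}$, one obtains
\begin{equation*}
\int_{\Omega} \phi^{s}\, \|Df\|^{n}\, dx \leq K s \|\omega\|_{\ell^1} \int_{\Omega} \phi^{s-1}\, |D\phi|\, |f|\, \|Df\|^{n-1}\, dx.
\end{equation*}
At the base exponent $s = n$, a single application of Hölder's inequality closes the argument and delivers the Caccioppoli estimate with a constant depending only on $n$, $K$ and $\|\omega\|_{\ell^1}$.

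For general $s \in (q, p)$ the previous display does not immediately admit absorption, because the powers of $\|Df\|$ on the two sides disagree; one must combine it with Iwaniec's sharp $L^{p}$ Hodge decomposition \cite[Theorem 7.8.2]{Iw:Ma:01}, whose operator norm is precisely the constant $\lambda(n)$. Concretely, the $(n-1)$-form $\sum_{I} \omega_I f^{i_1}\, df^{i_2} \wedge \dots \wedge df^{i_n}$ on the right-hand side is decomposed in $L^{s/(n-1)}$ into its exact and co-exact parts: the co-exact part is bounded by $\lambda(n)$ times the exact part, while the exact part pairs with $d\phi$ through a further integration by parts to reproduce a multiple of the left-hand side, up to a remainder controlled by $\|f \otimes D\phi\|_{L^{s}}^{s}$. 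The combined factor $K\lambda(n)\|\omega\|_{\ell^1}$ produced by distortion, Hodge duality and the $\ell^1$-sum of $\omega$ is exactly the quantity that appears in the critical exponents \eqref{eq:sharpexponents:intro}, and the interval $(q, p)$ is the range in which the resulting absorption inequality of the form $X \leq \alpha X + \beta$ is solvable for $X$ with $\alpha < 1$.

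The main obstacle will be orchestrating the Hodge decomposition so that the sum over multi-indices $I$ contributes only linearly in $\|\omega\|_{\ell^1}$, thereby keeping the critical exponents \eqref{eq:sharpexponents:intro} sharp rather than inflated by a combinatorial factor such as $\binom{m}{n}$; this is where treating $\omega$ as a decomposable constant-coefficient sum and applying the Hodge estimate uniformly in $I$ is essential. A secondary technical point is making the distributional computation of $f^{\star}\omega$ rigorous when $s < n$, which requires a careful mollification so that the integration by parts and all intermediate products in $\|Df\|$ pass to the limit with the correct integrability.
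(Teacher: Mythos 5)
Your argument is sound only at the natural exponent $s=n$ (where it essentially reproduces the Onninen--Pankka proof), but the theorem's content is the range $s\neq n$, and there the proposal has a genuine gap at its very first step. For $f\in W^{1,s}_{loc}$ with $s<n$ the identity
\begin{equation*}
\int_{\Omega}\phi^{s}\,\star f^{\star}\omega
=
-s\sum_{I}\omega_I\int_{\Omega}\phi^{s-1}\,d\phi\wedge f^{i_1}\,df^{i_2}\wedge\dots\wedge df^{i_n}
\end{equation*}
cannot be ``justified by mollification'': the pointwise Jacobian-type quantity $\star f^{\star}dx^{I}$ involves $n$ factors of $Df$ and need not be locally integrable, and wedge products of $n$ differentials are not continuous under mollification in $W^{1,s}$ below the natural exponent, so the pointwise and distributional pullbacks may differ. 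The discrepancy between them is exactly what the constant $\lambda(n)$ and the fundamental inequality \cite[Theorems 13.6.1 and 13.7.1]{Iw:Ma:01} are designed to quantify; relegating this to a ``secondary technical point'' hides the actual difficulty. Moreover, even if the identity held, your display bounds $\int\phi^{s}\|Df\|^{n}$ by $\int\phi^{s-1}|D\phi|\,|f|\,\|Df\|^{n-1}$, and no application of H\"older can convert this homogeneity-$n$ inequality into the homogeneity-$s$ estimate $\|\phi Df\|_{L^s}\le C\|f\otimes D\phi\|_{L^s}$; the sketched Hodge decomposition of the $(n-1)$-form $\sum_I\omega_I f^{i_1}df^{i_2}\wedge\dots\wedge df^{i_n}$ supplies no mechanism that introduces the missing weight $\|Df\|^{s-n}$, and the claim that its exact part ``reproduces a multiple of the left-hand side'' is not substantiated.

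The route that works (followed by the paper, after \cite[Theorem 14.4.1]{Iw:Ma:01}) is structurally different: one sets $H=\phi f$ so that all components are compactly supported, proves via an affine interpolation in the rank-one direction $f\otimes D\phi$ that $H$ satisfies an inhomogeneous distortion inequality $\|DH\|^{s}\le K\|DH\|^{s-n}\star H^{\star}\omega+G$ with $|G|\lesssim(\|DH\|+\|f\otimes D\phi\|)^{s-1}\|f\otimes D\phi\|$ (\Cref{prop:inhomogeneous:distortion:modified}), and then proves a fundamental inequality for calibrations,
\begin{equation*}
\Bigl|\int_{\Omega}\|DH\|^{s-n}\,H^{\star}\omega\Bigr|
\le
\lambda(n)\,\|\omega\|_{\ell^1}\Bigl|1-\tfrac{n}{s}\Bigr|\int_{\Omega}\|DH\|^{s}\,d\mathcal{H}^n,
\end{equation*}
by applying \cite[Theorem 13.6.1]{Iw:Ma:01} to each coordinate projection $\pi_I\circ H$ through the normalized expressions $H^{\star}dx^{I}/(|dH^{i_1}|^{\frac{n-s}{n}}\cdots|dH^{i_n}|^{\frac{n-s}{n}})$ (\Cref{lemm:integrationbyparts}, \Cref{cor:fundamentalinequality}); absorption is then possible precisely because $K\lambda(n)\|\omega\|_{\ell^1}|1-n/s|<1$ on $(q_\omega,p_\omega)$. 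Note also that your concern about keeping the dependence linear in $\|\omega\|_{\ell^1}$ rather than $\binom{m}{n}$ is a non-issue: summing the per-index estimates with weights $|\omega_I|$ yields $\|\omega\|_{\ell^1}$ automatically.
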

The notation $f \otimes D\phi$ refers to the \emph{tensor product} of the column vector $f$ and the row vector $D\phi$ and is identified with a section of matrices $\mathbb{R}^{ m \times n }$.

The interval $( q(K,n,1 ), p(K,n,1 ) )$ appears in the Caccioppoli inequality for quasiregular mappings \cite[Theorem 14.4.1]{Iw:Ma:01}. While the statement and the proof of \Cref{thm:caccioppoli:estimate} is similar to that of \cite[Theorem 14.4.1]{Iw:Ma:01}, key steps in the proof require careful modifications when $m > n$ even if $\|\omega\|_{ \ell^1 } = 1$.

\Cref{thm:caccioppoli:estimate} was previously known for quasiregular curves for the natural exponent $s = n$ \cite{Onn:Pan:21}. Moreover, \Cref{thm:caccioppoli:estimate} extends earlier results by Iwaniec and Martin \cite{Iwa:Mar:93,Iw:Ma:01} and Faraco and Zhong \cite{Far:Zho:06} for mappings.

Onninen and Pankka used the Caccioppoli inequality to prove a weak reverse Hölder inequality for quasiregular curves. In a similar way, we use \Cref{thm:caccioppoli:estimate} to derive a weak reverse Hölder inequality for quasiregular curves for exponents in the critical interval $( q(K,n,\|\omega\|_{ \ell^1 } ), p(K,n,\|\omega\|_{ \ell^1 } ) )$. We recall that a weak reverse Hölder inequality leads to higher Sobolev regularity for $f$ via Generalized Gehring's lemma. This leads to the following theorem.
\begin{theorem}[Higher Sobolev regularity]\label{thm:integrationbyparts}
Consider $K \geq 1$, a calibration $\omega \in \Lambda^{n} \mathbb{R}^m$, and $s \in ( q(K,n,\|\omega\|_{ \ell^1 } ), p(K,n,\|\omega\|_{ \ell^1 } ) )$. Suppose that $f \in W^{1,s}_{loc}( \Omega, \mathbb{R}^m )$ satisfies
\begin{equation*}
    \|Df\|^n
    \leq
    K \star f^{\star}\omega
    \quad\text{almost everywhere.}
\end{equation*}
Then $f \in W^{1,s'}_{loc}( \Omega, \mathbb{R}^m )$ for every $s' \in ( q(K,n,\|\omega\|_{ \ell^1 }), p(K,n,\|\omega\|_{ \ell^1 }) )$.
\end{theorem}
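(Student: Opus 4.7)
The plan is to follow the strategy indicated in the paragraph preceding the statement: derive a weak reverse Hölder inequality for $|Df|$ from \Cref{thm:caccioppoli:estimate}, and then invoke the Generalized Gehring lemma to upgrade integrability across the critical interval $(q,p) \coloneqq ( q(K,n,\|\omega\|_{\ell^1}), p(K,n,\|\omega\|_{\ell^1}) )$.

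First I would establish the reverse Hölder inequality. Fix $s \in (q, p)$ and a ball $B = B(x_0, r)$ with $\overline{2B} \subset \Omega$. Since $\omega \in \Lambda^{n}\mathbb{R}^m$ has constant coefficients, $f^{\star}\omega$ depends only on $Df$, so the hypothesis $\|Df\|^n \leq K \star f^{\star}\omega$ is invariant under the substitution $f \mapsto f - c$ for every constant $c \in \mathbb{R}^m$, and hence \Cref{thm:caccioppoli:estimate} continues to apply. Choosing a cutoff $\phi \in \mathcal{C}_c^{\infty}(2B)$ with $\phi = 1$ on $B$ and $|D\phi| \leq C/r$, and applying \Cref{thm:caccioppoli:estimate} to $f - f_{2B}$, where $f_{2B}$ is the mean value of $f$ over $2B$, I obtain
\[
\| Df \|_{L^s(B)} \leq \frac{C}{r}\| f - f_{2B} \|_{L^s(2B)}.
\]
Combining this with the Sobolev–Poincaré inequality at the conjugate exponent $s_{*} = ns/(n+s) < s$ (using its Morrey analogue if $s_{*} > n$) and normalizing yields the weak reverse Hölder inequality
\[
\left( \aint{B} |Df|^s \right)^{1/s} \leq C' \left( \aint{2B} |Df|^{s_{*}} \right)^{1/s_{*}},
\]
with $C' = C'(s, n, K, \|\omega\|_{\ell^1})$.

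Next I would apply the Generalized Gehring lemma to this inequality: it produces a quantitative $\delta = \delta(s, n, K, \|\omega\|_{\ell^1}) > 0$ such that $|Df| \in L^{s + s_{*}\delta}_{loc}$. Provided the improved exponent still lies in $(q, p)$, \Cref{thm:caccioppoli:estimate} may be reapplied at this higher integrability, and the procedure iterated, climbing toward $p$. For target exponents $s' \leq s$, the conclusion $f \in W^{1,s'}_{loc}$ is automatic from $f \in W^{1,s}_{loc}$ on relatively compact subsets by Hölder's inequality.

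The hard part will be showing that the Gehring improvement $\delta$ remains bounded below uniformly on each compact subinterval $[s_0, s_1] \Subset (q, p)$, so that the bootstrap actually reaches every $s' < p$ in finitely many steps rather than stalling at some interior exponent. Concretely, one needs to trace how the Caccioppoli constant of \Cref{thm:caccioppoli:estimate} behaves as $s$ approaches the endpoints of $(q, p)$; by design the endpoints are precisely where the critical exponents in \eqref{eq:sharpexponents:intro} degenerate, so any compact subinterval yields uniform control on $C$ and hence a uniform Gehring gain. This is the standard technical core of self-improvement arguments in the Iwaniec–Martin tradition \cite{Iw:Ma:01}, and should go through verbatim here once the constants in \Cref{thm:caccioppoli:estimate} are known to be controlled in the claimed manner.
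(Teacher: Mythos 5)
Your strategy coincides with the paper's: Caccioppoli plus Sobolev--Poincar\'e gives a weak reverse H\"older inequality, Gehring's lemma upgrades integrability, and the uniform control of constants on compact subintervals of $(q,p)$ propagates the exponent across the whole interval. There is, however, one step that fails as written. Your lower exponent $s_{*}=ns/(n+s)$ always satisfies $s_{*}<n$ (so the ``Morrey analogue if $s_{*}>n$'' caveat is vacuous); the real boundary problem is at the bottom: when $n=2$ and $q(K,2,\|\omega\|_{\ell^1})<s<2$ one has $s_{*}=2s/(2+s)<1$, and the Sobolev--Poincar\'e inequality with an exponent below $1$ on the right-hand side is not available for $W^{1,s}$ functions. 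This planar, below-natural-exponent range is nonempty (since $q(K,2,\|\omega\|_{\ell^1})<2$) and is exactly the regime needed for the removability application, so it cannot be discarded. The paper's proof treats it separately: for $n=2$ and $s\le 2$ it first enlarges the exponent on the function side by H\"older, bounding the $L^{s}$-average of $f-f_{2Q}$ by its $L^{\widehat{p}}$-average with $\widehat{p}=2s$, and then applies Sobolev--Poincar\'e from the exponent $q=2s/(1+s)\in(1,s)$, which satisfies $2q/(2-q)=\widehat{p}$; in all other cases ($n\ge 3$, or $n=2$ with $s>2$) the choice reduces to your $s_{*}$. With this modification your reverse H\"older inequality, now with the pair $(s,q)$ and $q>1$, holds in every case covered by the theorem.

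The remainder of your plan is sound and matches the paper in substance. Applying \Cref{thm:caccioppoli:estimate} to $f-f_{2B}$ is legitimate because $\omega$ has constant coefficients, and indeed the paper's \Cref{prop:caccioppoli:estimate} states the inequality with the constant $c$ built in. The ``hard part'' you identify --- a Gehring gain bounded below on compact subintervals of $(q,p)$ --- is supplied directly by \Cref{prop:caccioppoli:estimate}, whose constant degenerates only at the endpoints and is bounded on compact subintervals; your finite iteration toward any target $s'<p$ is then equivalent to the paper's argument that the set of admissible exponents is relatively open (by Gehring) and relatively closed (by the uniform constants) in $(q,p)$, hence all of it. Exponents $s'\le s$ are, as you say, automatic locally by H\"older's inequality.
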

\Cref{thm:remov:bounded:curve} follows from the inequality $\|\omega\|_{ \ell^1 } \leq \binom{m}{n}$ and Theorems \ref{thm:caccioppoli:estimate} and \ref{thm:integrationbyparts} with standard arguments.

Onninen and Pankka proved a Hölder continuity estimate with the exponent $1/( K \|\omega\|_{ \ell^1 } )$ for calibrations $\omega \in \Lambda^{n} \mathbb{R}^m$ and quasiregular $\omega$-curves \cite[Theorem 1.1]{Onn:Pan:21}. Later, the sharp exponent $1/K$ was proved by the author, cf. \cite[Theorem 1.5]{Iko:23}, using pushforward of integral currents under Sobolev maps and Almgren's isoperimetric inequality for integral currents \cite{Alm:86}; in particular the dependence on $\|\omega\|_{ \ell^1 }$ was removed. Using the techniques from \cite{Iko:23}, we prove a similar improvement on the upper bound in \Cref{thm:integrationbyparts}.
\begin{theorem}\label{thm:pushforward}
Consider $K \geq 1$ and $n \geq 2$. Then there is $r = r(K,n) > n$ such that if $\omega \in \Lambda^n \mathbb{R}^m$ is a calibration and $f \in W^{1,n}_{loc}( \Omega, \mathbb{R}^m )$ satisfies
\begin{equation*}
    \|Df\|^n
    \leq
    K \star f^{\star}\omega
    \quad\text{almost everywhere,}
\end{equation*}
then $f \in W^{1,s'}_{loc}( \Omega, \mathbb{R}^m )$ for every $s' \in [n, r(K,n) )$.
\end{theorem}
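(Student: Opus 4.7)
The plan is to establish a reverse-Hölder inequality for $\|Df\|^n$ whose constants depend only on $K$ and $n$ (not on $\|\omega\|_{\ell^1}$ or $m$); Gehring's lemma will then produce an exponent $r(K,n) > n$ such that $\|Df\| \in L^{s'}_{loc}$ for every $s' \in [n, r(K,n))$.

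Since the Caccioppoli inequality of \Cref{thm:caccioppoli:estimate} carries the factor $\|\omega\|_{\ell^1}$, we instead rely on the pushforward--calibration technique from \cite{Iko:23} that already yielded the sharp Hölder exponent $1/K$. For $f \in W^{1,n}_{loc}$ and almost every radius $r$, the trace $f|_{\partial B(x_0, r)}$ lies in $W^{1, n-1}$, and there is a well-defined integer-multiplicity rectifiable $n$-current $T_r$ in $\mathbb{R}^m$ obtained by pushing forward $B(x_0, r)$ along $f$, satisfying
\begin{equation*}
    \mathbf{M}( T_r ) \leq \int_{ B(x_0, r) } \| Df \|^n,
    \qquad
    \mathbf{M}( \partial T_r ) \leq \int_{ \partial B(x_0, r) } \| Df \|^{ n - 1 }.
\end{equation*}
Since $\omega$ is a closed calibration, Almgren's isoperimetric inequality \cite{Alm:86} supplies an integral current $S_r$ with $\partial S_r = \partial T_r$ and $\mathbf{M}( S_r ) \leq C( n ) \mathbf{M}( \partial T_r )^{ n / ( n - 1 ) }$. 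The calibration property $\| \omega \| \leq 1$ together with the closedness of $\omega$ and the hypothesis on $f$ then yields
\begin{equation*}
    \int_{ B(x_0, r) } \| Df \|^n
    \leq
    K \int_{ B(x_0, r) } \star f^{\star} \omega
    =
    K T_r( \omega )
    =
    K S_r( \omega )
    \leq
    K C( n ) \left( \int_{ \partial B(x_0, r) } \| Df \|^{ n - 1 } \right)^{ n / ( n - 1 ) }.
\end{equation*}

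Averaging this inequality in $r$ over an annulus and applying Hölder's inequality on the spherical integrals with some exponent $q \in ( n - 1, n )$ upgrades the estimate to a reverse Hölder inequality for $\| Df \|^n$ on $B( x_0, R/2 )$ controlled by the $L^q$-norm of $\| Df \|$ on $B( x_0, R )$, with constants depending only on $K$ and $n$. Gehring's lemma then delivers the exponent $r( K, n ) > n$ claimed by the theorem.

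The main technical obstacle is to justify the pushforward formalism at the level of $W^{1,n}_{loc}$-regularity: the existence of the integer-multiplicity currents $T_r$ for almost every $r$, the validity of the mass and boundary-mass bounds, and the identity $T_r( \omega ) = S_r( \omega )$ for the closed calibration $\omega$. These are developed in \cite{Iko:23} in exactly the generality required here; this step is precisely where the dependence on $\binom{m}{n}$, and hence on $\| \omega \|_{ \ell^1 }$, drops out of the constants.
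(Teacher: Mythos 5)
Your proposal follows essentially the same route as the paper: the paper's proof invokes exactly the pushforward--current and Almgren isoperimetric machinery of \cite{Iko:23} (cited there as Corollary 1.4 and Lemma 5.1) to get $\int_Q f^{\star}\omega \leq A_{n-1}\bigl(\int_{\partial Q}\|Df\|^{n-1}\bigr)^{n/(n-1)}$, combines it with the distortion inequality, and concludes with the Generalized Gehring lemma, so the constants depend only on $K$ and $n$. The only cosmetic difference is that the paper works on cubes and selects a good radius by a Fubini/pigeonhole argument (rather than averaging the inequality in $r$, where Jensen would go the wrong way), arriving at a weak reverse H\"older inequality for $g=\|Df\|^{n-1}$ with exponent $n/(n-1)$.
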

Given that the exponent $r$ depends only on the distortion $K$ and $n$, \Cref{thm:pushforward} naturally extends earlier results of Bojarski \cite{Boyar:57} and Gehring \cite{Gehr:73} for planar quasiregular mappings and quasiconformal homeomorphisms in $\mathbb{R}^n$, respectively, and a later result by Meyers and Elcrat for quasiregular mappings in all dimensions \cite{Meyers:Elcrat:75-weakhold}. Regarding the lower bound in \Cref{thm:integrationbyparts}, it is not clear to us if the theorem holds with $q( K, n, 1 )$ or if the dependence on $\|\omega\|_{ \ell^1 }$ can be improved.

Aside from \Cref{thm:remov:bounded:curve}, one of the main motivations for Theorem \ref{thm:integrationbyparts} is the following regularity result for quasiregular curves with minimal distortion.
\begin{corollary}\label{cor:liouville}
Let $m \geq n \geq 2$. Then there is $\epsilon = \epsilon( n, m ) \in (0, n/2)$ such that for every calibration $\omega \in \Lambda^{n}\mathbb{R}^m$, every $f \in W^{1,n-\epsilon}_{loc}(\Omega, \mathbb{R}^m)$ satisfying
\begin{equation*}
    \| Df \|^n
    \leq
    \star f^{\star}\omega
    \quad\text{almost everywhere}
\end{equation*}
has a representative that is $n$-harmonic and $\mathcal{C}^{1}( \Omega )$-regular.
\end{corollary}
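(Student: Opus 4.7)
The plan is to combine \Cref{thm:integrationbyparts} with the variational characterization of calibrated maps and then invoke the classical regularity theory for the $n$-Laplace system. The key point is that a quasiregular $\omega$-curve with distortion $K=1$ is an energy minimizer once enough Sobolev regularity is available; the role of \Cref{thm:integrationbyparts} is to bootstrap past the exponent $n$.

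First, I would use \Cref{thm:integrationbyparts} with $K=1$ to promote $f$ to $W^{1,n}_{loc}$. Since any calibration $\omega \in \Lambda^n \mathbb{R}^m$ satisfies $\|\omega\|_{\ell^1} \leq \binom{m}{n}$, the lower critical exponent from \eqref{eq:sharpexponents:intro} obeys
\[
q\bigl(1, n, \|\omega\|_{\ell^1}\bigr)
= \frac{\lambda(n)\|\omega\|_{\ell^1}}{\lambda(n)\|\omega\|_{\ell^1}+1}\, n
\leq \frac{\lambda(n)\binom{m}{n}}{\lambda(n)\binom{m}{n}+1}\, n.
\]
Setting $\epsilon = \epsilon(n,m) := \frac{n}{2(\lambda(n)\binom{m}{n}+1)}$, we have $\epsilon \in (0, n/2)$ (since $\lambda(n)\binom{m}{n} \geq 1$) and $n - \epsilon > q(1, n, \|\omega\|_{\ell^1})$ uniformly in $\omega$. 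Applying \Cref{thm:integrationbyparts} with $s = n - \epsilon$ yields $f \in W^{1,s'}_{loc}(\Omega, \mathbb{R}^m)$ for every $s' \in (q(1,n,\|\omega\|_{\ell^1}), p(1,n,\|\omega\|_{\ell^1}))$; in particular, $f \in W^{1,n}_{loc}(\Omega, \mathbb{R}^m)$.

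Next, I would argue that $f$ minimizes the operator-norm $n$-energy against its own boundary values. The calibration bound $\star g^*\omega \leq \|Dg\|^n$ holds a.e.\ for every $g \in W^{1,n}_{loc}(\Omega, \mathbb{R}^m)$, so combining it with the hypothesis forces the pointwise identity $\|Df\|^n = \star f^*\omega$ a.e. Since $\omega$ has constant coefficients on the contractible space $\mathbb{R}^m$, we may write $\omega = d\eta$ for some $\eta \in \Omega^{n-1}(\mathbb{R}^m)$. A standard smooth-approximation argument, together with the $W^{1,n}$-regularity of $f$ and $g$, shows $g^*\omega = d(g^*\eta)$ distributionally; hence for every $\Omega' \Subset \Omega$ and every competitor $g \in W^{1,n}(\Omega', \mathbb{R}^m)$ with $g - f \in W^{1,n}_0(\Omega', \mathbb{R}^m)$, Stokes' theorem gives
\[
\int_{\Omega'} \star g^*\omega = \int_{\Omega'} \star f^*\omega.
\]
Therefore $\int_{\Omega'} \|Dg\|^n \geq \int_{\Omega'} \star g^*\omega = \int_{\Omega'} \|Df\|^n$, and $f$ is a minimizer on every relatively compact subdomain.

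Finally, $\mathcal{C}^1$-regularity and $n$-harmonicity of $f$ follow from the classical regularity theory for degenerate elliptic $n$-Laplace systems (Uhlenbeck, Tolksdorf, DiBenedetto, Lewis). The main obstacle I anticipate is the passage from the Frobenius-norm to the operator-norm integrand: the operator norm to the $n$-th power is only rank-one convex, not strictly convex on all matrices, so the standard regularity machinery does not apply verbatim. The reduction I have in mind uses the pointwise calibration identity $\|Df\|^n = \star f^*\omega$ to confine the minimizer to the submanifold of matrices on which the operator-norm and the Frobenius-norm $n$-energies coincide up to a benign multiplicative constant; this is precisely the approach carried out in \cite{Iw:Ma:01, Onn:Pan:21}, and it should yield the desired $n$-harmonic and $\mathcal{C}^1$-regular representative.
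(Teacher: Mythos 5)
Your first half reproduces the paper's route: the choice $\epsilon(n,m)=\tfrac{n}{2(\lambda(n)\binom{m}{n}+1)}$ indeed gives $n-\epsilon>q(1,n,\|\omega\|_{\ell^1})$ for every calibration, and \Cref{thm:integrationbyparts} then yields $f\in W^{1,s'}_{loc}$ for all $s'<p(1,n,\|\omega\|_{\ell^1})$; since $p(1,n,\|\omega\|_{\ell^1})>n$, $f$ has a continuous representative with locally finite $n$-energy, and your null-Lagrangian identity $\int_{\Omega'}\star g^{\star}\omega=\int_{\Omega'}\star f^{\star}\omega$ for $g-f\in W^{1,n}_0(\Omega')$ is the standard fact for constant-coefficient $n$-forms. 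Up to here the argument is sound and is essentially what the paper does.

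The endgame, however, has a genuine gap. Minimality of the \emph{operator-norm} $n$-energy is not what the conclusion asks for ("$n$-harmonic" refers to the Euler--Lagrange system of the Frobenius-norm functional), and — as you yourself note — Uhlenbeck-type regularity does not apply to $\int\|Dg\|^n$. Your proposed fix ("confine the minimizer to the submanifold of matrices where the two norms agree") cannot work as stated, because the competitors $g$ are arbitrary and their differentials are not confined to that set; nor is this reduction carried out in \cite{Iw:Ma:01,Onn:Pan:21}. The missing ingredients are two pointwise facts: (i) for every $A\in\mathbb{R}^{m\times n}$ and comass-one $\omega$ one has the sharp bound
\begin{equation*}
    \omega(Ae_1\wedge\cdots\wedge Ae_n)\;\le\;|Ae_1|\cdots|Ae_n|\;\le\;n^{-n/2}\,\|A\|_{HS}^{\,n},
\end{equation*}
and (ii) equality in your identity $\|Df\|^n=\star f^{\star}\omega$ forces equality in the Hadamard chain, i.e. $Df$ is conformal a.e. (orthogonal columns of equal length), so that $\|Df\|_{HS}^{\,n}=n^{n/2}\star f^{\star}\omega$ a.e. Combined with your Stokes identity this gives
\begin{equation*}
    \frac{1}{n^{n/2}}\int_{\Omega'}\|Df\|_{HS}^{\,n}
    =\int_{\Omega'}\star f^{\star}\omega
    =\int_{\Omega'}\star g^{\star}\omega
    \le\frac{1}{n^{n/2}}\int_{\Omega'}\|Dg\|_{HS}^{\,n},
\end{equation*}
so $f$ minimizes the \emph{Frobenius} $n$-energy, and only then do the classical results (Uhlenbeck et al.) give $n$-harmonicity and $\mathcal{C}^{1}$-regularity. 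This comparison is exactly \cite[Theorem 5.1]{Hei:Pan:Pry:23}, which is what the paper cites at this point (the corrected inequality displayed in the introduction is precisely the chain above). Replacing your operator-norm minimization step by this Frobenius-norm comparison, or simply invoking \cite[Theorem 5.1]{Hei:Pan:Pry:23} for the continuous representative, closes the gap.
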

We may consider any $\epsilon(n,m) \in ( 0, n-q(1,n,\binom{m}{n}) ) \subset (0,n/2)$ for the constant in \eqref{eq:sharpexponents:intro}. The optimal $\epsilon$ cannot be larger than $n/2$ as follows by post composing the examples of Iwaniec and Martin \cite[Theorem 12.1]{Iwa:Mar:93} with a non-constant linear $1$-quasiregular $\omega$-curve $L \colon \mathbb{R}^n \to \mathbb{R}^m$. See \cite[Theorem 1]{Iwa:Mar:93} for a sharp version of \Cref{cor:liouville} for mappings in even dimension and \cite{Iwa:92} for a similar theorem for mappings in all dimensions.

\Cref{cor:liouville} follows readily from \Cref{thm:integrationbyparts}. Indeed, as $p( 1, n, \|\omega\|_{ \ell^1 } ) > n$, the mapping $f$ has a continuous representative. The continuous representative is then a $\omega$-calibrated curve in the sense of \cite{Hei:Pan:Pry:23} and thus $n$-harmonic and $\mathcal{C}^{1}( \Omega )$-regular by \cite[Theorem 5.1]{Hei:Pan:Pry:23}; notice that the second to last inequality in the proof of \cite[Theorem 5.1]{Hei:Pan:Pry:23} should read
\begin{equation*}
    \frac{ 1 }{ n^{n/2} }
    \int_G \| DF \|^{n}_{HS}
    =
    \int_G F^{\star}\omega
    =
    \int_G v^{\star}\omega
    \leq
    \frac{ 1 }{ n^{n/2} }
    \int_G \|Dv\|^{ n }_{HS}.    
\end{equation*}

\subsection{Removability for quasiregular curves}\label{sec:rem:manifold}
In this section, we connect Sobolev removability to removability problems for quasiregular curves. To motivate the problem, we recall that Ahlfors and Beurling established the following removability theorem \cite{Ahl:Beu:50}:
\begin{theorem}\label{thm:zeroabsolutearea}
Let $E \subset \mathbb{C}$ be compact. Then $E$ is removable for quasiconformal homeomorphisms if and only if $E$ is removable for $W^{1,2}$-functions.
\end{theorem}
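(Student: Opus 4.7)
The plan is to prove both implications through the common intermediate characterisation that a compact set $E \subset \mathbb{C}$ is removable for $W^{1,2}$-functions if and only if it is a null set for extremal distance (an NED set), and that this NED property is simultaneously the conformal-capacity condition equivalent to quasiconformal removability. I would set up this equivalence first, since it symmetrises the two implications and reduces both to quantities that are stable under conformal change of variables.

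For the direction from $W^{1,2}$-removability to quasiconformal removability I would start from a $K$-quasiconformal homeomorphism $f \colon U \setminus E \to f( U \setminus E )$. After post-composing with a suitable Möbius transformation on the target, I may assume $f$ is bounded. Since every quasiconformal homeomorphism lies in $W^{1,2}_{loc}$, the coordinate functions of $f$ belong to $W^{1,2}_{loc}( U \setminus E )$, and $W^{1,2}$-removability of $E$ yields an extension $F \in W^{1,2}_{loc}( U )$. Because $W^{1,2}$-removability forces $E$ to have Lebesgue measure zero, the Beltrami coefficient $\mu_f$ extends by zero across $E$ and $F$ solves the same Beltrami equation almost everywhere on $U$ with the same distortion bound. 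A Stoilow factorisation then identifies $F$ with a homeomorphic solution of the Beltrami equation, providing the required quasiconformal extension.

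The reverse implication is the more delicate direction. Assuming that $E$ is not $W^{1,2}$-removable, the plan is to produce a bounded quasiconformal homeomorphism on $U \setminus E$ whose continuous extension across $E$ fails to be quasiconformal. Using the failure of $W^{1,2}$-removability I would first exhibit disjoint continua $F_0, F_1 \subset U$ separated by $E$ for which the conformal modulus of the curve family joining $F_0$ and $F_1$ in $U \setminus E$ is strictly smaller than its modulus in $U$. Uniformising the complementary doubly connected region to a round annulus and transporting a model radial stretch back to $U \setminus E$ then produces a quasiconformal map whose quasiconformal extension to $U$ would violate the modulus-inequality characterisation of quasiconformality, contradicting quasiconformal removability of $E$.

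The main obstacle is the quantitative link between the failure of $W^{1,2}$-removability and the existence of such a modulus gap. This rests on characterising $W^{1,2}$-removable compact sets as NED sets through test densities: any $u \in W^{1,2}_{loc}( U \setminus E ) \setminus W^{1,2}_{loc}( U )$ can be truncated and normalised so that $|\nabla u|$ becomes an admissible density witnessing positive conformal capacity between appropriately chosen sublevel sets of $u$, from which the required pair $(F_0, F_1)$ is then extracted. Once the NED equivalence is in hand, both implications align into a single conformally invariant statement about extremal length.
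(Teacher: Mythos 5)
Your overall architecture is the same as the one the paper relies on: the paper does not reprove this statement but assembles it from the literature, namely Ahlfors--Beurling's Theorem 9 (removability for conformal maps is equivalent to negligibility for extremal length) \cite{Ahl:Beu:50}, the Stoïlow factorization to pass between conformal and quasiconformal removability \cite[Section 5.5]{Ast:Iwa:Mar:09}, and Vodop'yanov--Gol'dshtein's identification of NED sets with the sets removable for $W^{1,2}$-functions \cite{Vod:Gol:77}. Your first direction (extend the coordinate functions of a bounded quasiconformal embedding by $W^{1,2}$-removability, extend the Beltrami coefficient by zero across the measure-zero set $E$, apply Stoïlow factorization and check injectivity of the holomorphic factor off a null set) is exactly the standard argument the paper alludes to as the ``not difficult'' implication, and it is essentially sound.

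The genuine gap is in the converse, quasiconformal removability $\Rightarrow$ $W^{1,2}$-removability. First, the continua in the NED comparison are not ``separated by $E$'': they lie in $\mathbb{S}^2 \setminus E$ and the modulus of the connecting family in the complement must be positive, so if $E$ separated $F_0$ from $F_1$ the comparison would degenerate. Second, the construction itself does not work as described: the region between $F_0$ and $F_1$ with $E$ deleted is not doubly connected, while if you instead uniformize the full ring $\mathbb{S}^2 \setminus ( F_0 \cup F_1 )$ (which contains $E$), the transported radial stretch is defined across $E$, so its restriction to $\mathbb{S}^2 \setminus E$ extends trivially and yields no contradiction; moreover you must exhibit a quasiconformal embedding of all of $\mathbb{S}^2 \setminus E$, not just of a neighbourhood of the ring. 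Third, and most importantly, the definition of removability only requires an extension that is quasiconformal with \emph{some} unspecified dilatation, so a fixed modulus discrepancy produced by a $K$-quasiconformal stretch can never contradict the existence of such an extension by modulus counting alone; one needs conformal test embeddings of the complement together with a genuine rigidity or extremal-length argument (the extremal slit-mapping/span machinery of Ahlfors and Beurling). That step is precisely the hard content of \cite[Theorem 9]{Ahl:Beu:50}, which your sketch assumes rather than proves, so as it stands the proposal establishes only the easy half of \Cref{thm:zeroabsolutearea}.
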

In this theorem and the forthcoming discussion, we identify $\mathbb{C}$ with the punctured sphere via the stereographic projection and similarly the $\mathbb{R}^n$ with the punctured sphere $\mathbb{S}^n$. We say that a closed set $E \subset \mathbb{S}^n$ is \emph{removable for quasiconformal homeomorphisms} if every quasiconformal embedding $\phi \colon \mathbb{S}^n \setminus E \rightarrow \mathbb{S}^n$ is the restriction of a quasiconformal homeomorphism $\Phi \colon \mathbb{S}^n \rightarrow \mathbb{S}^n$.

\Cref{thm:zeroabsolutearea} is a modern formulation of \cite[Theorem 9, p. 120]{Ahl:Beu:50}. Indeed, therein Ahlfors and Beurling prove that the \emph{removability for conformal mappings} is equivalent to $E$ being \emph{negligible for extremal length}. In the plane, the equivalence between the removability for conformal mappings and for quasiconformal homeomorphisms follows from Stoïlow factorization theorem, cf. \cite[Section 5.5]{Ast:Iwa:Mar:09}.

Vodop'yanov and Gol'dshtein proved in \cite{Vod:Gol:77} that the class of sets removable for $W^{1,n}$-functions and the class of sets negligible for extremal length coincide and it is not difficult to prove that a set removable for $W^{1,n}$-functions is removable for quasiconformal homeomorphisms. The implication that a set removable for quasiconformal mappings is removable for $W^{1,n}$-functions remains open in higher dimensions; see \cite[Introduction]{Ase:09} or \cite[Section 1.2]{Nta:23:CNED} for recent overviews on the topic. We also highlight the following observation in \cite[p. 291-292]{Kos:99}: if $\mathcal{H}^{n-1}( E ) = 0$, then $E$ is removable for $W^{1,n}$-functions but, in general, such sets can have Hausdorff dimension anywhere in the interval $[0,n]$. Notice that a closed set satisfying the Hausdorff d-measure assumption of \Cref{conj:remov:bounded} or the strict dimension inequality \eqref{eq:hausdorffbound} in \Cref{thm:remov:bounded:curve} is removable for $W^{1,n}$-functions.

We now move to the manifold setting. For this purpose, consider a Riemannian $n$-manifold $M$, a closed set $E \subset M$, and a Riemannian $m$-manifold $N$. We say that $f \in W^{1,n}_{loc}( M \setminus E, N )$ has \emph{finite energy on precompact open subsets of $M$} if $\|Df\|^n$ is integrable in $U \setminus E$ whenever $U \subset M$ is precompact and open.

Consider a closed $\omega \in \Omega^{n}( N )$. If every quasiregular $\omega$-curve $f \colon M \setminus E \to N$ having finite energy on precompact open subsets of $M$ is the restriction of a $K$-quasiregular $\omega$-curve $F \colon M \to N$, we say that $E$ is \emph{removable for quasiregular $\omega$-curves $M \setminus E \to N$ having finite energy on precompact open subsets of $M$}. We prove the following.
\begin{theorem}\label{thm:main:converse:intro}
Let $M$ be an oriented Riemannian $n$-manifold, and let $N$ be a Riemannian $m$-manifold with sectional curvature bounded from above and injectivity radius bounded from below, and $m \geq n \geq 2$. Suppose that $\omega \in \Omega^{n}( N )$ is strongly nonvanishing, closed, and bounded.

Then a closed set $E \subset M$ removable for $W^{1,n}$-functions is removable for quasiregular $\omega$-curves $M \setminus E \rightarrow N$ having finite energy on precompact open subsets of $M$. 

Moreover, if the Hausdorff dimension of $E$ satisfies the strict inequality \eqref{eq:hausdorffbound} in \Cref{thm:remov:bounded:curve} for $K \geq 1$, a continuous extension of a $K$-quasiregular $\omega$-curve $f \colon M \setminus E \to N$ is a $K$-quasiregular $\omega$-curve.
\end{theorem}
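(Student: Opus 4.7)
This theorem splits into two claims: removability of $E$ for quasiregular $\omega$-curves having finite energy on precompact open subsets of $M$ (the main claim), and the \emph{moreover} refinement under a Hausdorff dimension bound. For the main claim I would proceed in three steps: (a) produce a continuous extension $F \colon M \to N$ of $f$; (b) promote $F$ to $W^{1,n}_{loc}(M, N)$ using the removability of $E$ for $W^{1,n}$-functions; (c) propagate the pointwise quasiregular inequality from $M \setminus E$ to $M$. The \emph{moreover} claim is then extracted from the main claim once the dimension bound has been translated into $W^{1,n}$-removability.

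For step (a) I would localize both source and target: completeness of $N$ together with the upper sectional curvature bound makes the exponential map a bi-Lipschitz identification on geodesic balls smaller than the convexity radius of any base point $p_0 \in N$, so pulling back $\omega$ via such a chart produces a bounded, closed, strongly nonvanishing smooth $n$-form $\widetilde\omega$ on a convex Euclidean domain, whose ratio $\sup\|\widetilde\omega\|/\inf\|\widetilde\omega\|$ may be absorbed into a slightly larger distortion constant. With this reduction, \Cref{thm:caccioppoli:estimate} and \Cref{thm:integrationbyparts} applied to $f|_{M \setminus E}$ give higher integrability of $\|Df\|$ on $M \setminus E$, and Morrey's embedding then furnishes local Hölder continuity of $f$ away from $E$. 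To extend continuously across $E$, I would exploit that $W^{1,n}$-removability is equivalent to $E$ having vanishing $n$-capacity, which permits the construction, for any $x_0 \in E$ and any $\varepsilon > 0$, of smooth cutoffs $\psi_\varepsilon$ equal to one on a neighborhood of $E \cap B(x_0, r)$ with $W^{1,n}$-norm less than $\varepsilon$. Testing the Caccioppoli inequality against cutoffs of the form $(1 - \psi_\varepsilon)\chi$, where $\chi$ cuts off $B(x_0, r)$, and combining with the Hölder control on almost every sphere $\partial B(x_0, r')$ avoiding $E$ (by a Fubini-capacity argument), yields $\osc_{B(x_0, r) \setminus E} f \to 0$ as $r \to 0$. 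Hence $\lim_{x \to x_0} f(x)$ exists in $N$ and $F$ is continuous.

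For step (b), after composing $F$ locally with a chart of $N$ around each image point, the bounded continuous coordinates of $F$ restrict to $M \setminus E$ as bounded $W^{1,n}_{loc}$-functions with locally finite energy, and removability of $E$ for $W^{1,n}$-functions upgrades them to $W^{1,n}_{loc}(M)$. For step (c), zero $n$-capacity of $E$ forces $E$ to have Lebesgue measure zero, so the pointwise inequality $(\|\omega\| \circ F)\|DF\|^n \leq K \star F^\star \omega$, valid a.e.\ on $M \setminus E$, extends a.e.\ to $M$.

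For the \emph{moreover} claim, the strict inequality \eqref{eq:hausdorffbound} forces $\mathrm{dim}_{\mathcal{H}}(E) < n/2 \leq n - 1$, so $\mathcal{H}^{n-1}(E) = 0$ and \cite{Kos:99} renders $E$ removable for $W^{1,n}$-functions. When a continuous extension $F$ is given, local boundedness of $F$ together with the Caccioppoli cutoff argument of step (a) supplies the finite-energy hypothesis on precompact open subsets of $M$, so the main claim applies and $F$ is a $K$-quasiregular $\omega$-curve. The principal obstacle is step (a): the passage from Hölder control on $M \setminus E$ to uniform oscillation control across $E$ requires a delicate balancing between the capacity cutoffs, the higher Sobolev regularity, and the curvature-bounded geometry of $N$.
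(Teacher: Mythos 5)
There is a genuine gap, and it sits exactly where you locate the ``principal obstacle'': step (a). Your plan hinges on the claim that removability of $E$ for $W^{1,n}$-functions is equivalent to $E$ having vanishing $n$-capacity, which would supply cutoffs $\psi_\varepsilon$ equal to one near $E$ with arbitrarily small $W^{1,n}$-norm. That equivalence is false in the direction you need: sets of zero $n$-capacity have Hausdorff dimension zero, whereas $W^{1,n}$-removable sets can be much larger --- the paper recalls (citing \cite{Kos:99}) that $\mathcal{H}^{n-1}(E)=0$ already suffices for removability and that removable sets can have Hausdorff dimension anywhere in $[0,n]$. For such $E$ no small-energy cutoffs exist, so the capacity--Fubini oscillation argument cannot start, and the continuity-first strategy collapses precisely in the generality the theorem claims. (A second, smaller issue in step (a): an upper sectional curvature bound plus completeness does not give the bi-Lipschitz exponential charts you invoke, and nothing in your sketch prevents $f$ from leaving every fixed chart of the noncompact target near a point of $E$.) The paper avoids proving continuity across $E$ by hand: it first builds a Sobolev extension directly, using the equivalence between removability for real-valued $W^{1,n}$-functions and for metric-space-valued $W^{1,n}$-mappings (\Cref{prop:removability:mappings}, via the isometric embedding $N\hookrightarrow L^\infty(N)$ and the metric approximation property), and only then obtains a continuous $K$-quasiregular representative from the continuity theory of \cite[Section~6.3]{Iko:23}; this is the step where completeness and the curvature bound on $N$ are actually used. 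Your steps (b)--(c) are fine once a continuous extension exists, but they presuppose step (a).

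Your treatment of the ``moreover'' part also differs from the paper's and inherits the same weakness: the paper does not route it through the first part at all. Instead it localizes around a point of $E$, uses the \emph{given} continuity of the extension to freeze $\omega$ to a constant-coefficient calibration at the price of a slightly larger distortion constant (as in \cite[Lemma~5.2]{Pan:20}), and then applies \Cref{thm:remov:bounded:curve} to the resulting bounded Euclidean quasiregular curve; no finite-energy hypothesis is needed or produced. Your detour --- manufacturing finite energy from continuity via ``the Caccioppoli cutoff argument of step (a)'' and then citing the main claim --- could be repaired under the dimension bound, since \eqref{eq:hausdorffbound} does give zero Sobolev $s$-capacity for a suitable $s<n$, but as written it leans on the flawed cutoff step and also skips the freezing of the variable form: \Cref{thm:caccioppoli:estimate} requires a constant-coefficient calibration, and absorbing the ratio $\sup\|\omega\|/\inf\|\omega\|$ into $K$ does not achieve this without the smallness of the image that continuity provides.
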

In particular, under the Hausdorff dimension bound and the above assumptions on $\omega$, a $K$-quasiregular $\omega$-curve $M \setminus E \to N$ extends to a $K$-quasiregular $\omega$-curve $F \colon M \to N$ if and only if it has a continuous extension. We also note that a closed set of Hausdorff dimension zero, e.g. a discrete set, satisfies the strict inequality \eqref{eq:hausdorffbound} for every $K \geq 1$ and $m \geq n$. With these observations in mind, \Cref{thm:main:converse:intro} can be considered an extension of Riemann's theorem on removability of singularities for holomorphic mappings.

\Cref{thm:main:converse:intro} readily implies the following theorem.
\begin{theorem}\label{cor:quasiregular}
Let $M$ be an oriented Riemannian $n$-manifold, and let $N$ be an oriented Riemannian $n$-manifold with sectional curvature bounded from above and injectivity radius bounded from below, and $n \geq 2$. Then a closed set $E \subset M$ removable for $W^{1,n}$-functions is removable for quasiregular mappings $M \setminus E \rightarrow N$ having finite energy on precompact open subsets of $M$.
\end{theorem}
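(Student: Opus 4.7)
The plan is to deduce \Cref{cor:quasiregular} directly from \Cref{thm:main:converse:intro} by specializing the closed form $\omega$ to the Riemannian volume form of $N$. Since $N$ is an oriented Riemannian $n$-manifold, $\mathrm{vol}_N \in \Omega^{n}(N)$ is well-defined; it is closed as a top-degree form, and its pointwise comass satisfies $\|\mathrm{vol}_N\|(x) = 1$ for every $x \in N$, because the only orthonormal $n$-frame in $T_xN$ (up to sign) evaluated against $(\mathrm{vol}_N)_x$ returns $\pm 1$. In particular, $\mathrm{vol}_N$ is strongly nonvanishing and bounded, so it satisfies all hypotheses on $\omega$ in \Cref{thm:main:converse:intro}. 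The hypotheses $m = n \geq 2$, completeness, and the sectional curvature bound on $N$ are inherited directly from the statement of \Cref{cor:quasiregular}.

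Next, I would verify that a continuous map $f \in W^{1,n}_{loc}(M \setminus E, N)$ is $K$-quasiregular in the sense of \eqref{eq:quasiregular} if and only if it is a $K$-quasiregular $\mathrm{vol}_N$-curve in the sense of \eqref{eq:quasiregular:curve}. Substituting $\omega = \mathrm{vol}_N$ into \eqref{eq:quasiregular:curve} and using $\|\mathrm{vol}_N\| \circ f \equiv 1$ recovers \eqref{eq:quasiregular} verbatim. The Sobolev regularity requirement is identical in both formulations, and so is the finite-energy-on-precompact-subsets condition, since both only involve the integrability of $\|Df\|^n$.

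With these identifications, \Cref{cor:quasiregular} becomes a direct specialization of the first assertion of \Cref{thm:main:converse:intro}: whenever $E \subset M$ is removable for $W^{1,n}$-functions, every $K$-quasiregular $\mathrm{vol}_N$-curve $f \colon M \setminus E \to N$ with finite energy on precompact open subsets of $M$ extends to a $K$-quasiregular $\mathrm{vol}_N$-curve $F \colon M \to N$, which, translated back, is precisely a $K$-quasiregular mapping extending $f$. There is essentially no obstacle here; the entire analytic content has been absorbed into \Cref{thm:main:converse:intro}, and the only point requiring a moment's attention is the comass normalization $\|\mathrm{vol}_N\| \equiv 1$, which ensures that the two quasiregularity conditions match with the same distortion constant $K$.
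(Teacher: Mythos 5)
Your proof is correct and follows exactly the route the paper takes: the paper states that \Cref{cor:quasiregular} is an immediate corollary of \Cref{thm:main:converse:intro}, obtained by taking $\omega = \mathrm{vol}_N$, which is closed, bounded, strongly nonvanishing with comass identically one, so that $K$-quasiregular mappings coincide with $K$-quasiregular $\mathrm{vol}_N$-curves. Your spelled-out verification of the comass normalization and the matching of the two distortion inequalities is exactly the (implicit) content of the paper's one-line deduction.
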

\Cref{cor:quasiregular} implies that \Cref{thm:main:converse:intro} is qualitatively sharp. Indeed, by \cite[Theorem 12.10]{Iwa:Mar:93} (see also \cite{Rick:95}), there are compact sets $E \subset \mathbb{S}^n$ of arbitrarily small Hausdorff dimension and quasiregular mappings $f \colon \mathbb{S}^n \setminus E \rightarrow \mathbb{S}^n$ that do not extend continuously to any point of $E$. The set $E$ can be taken to be a point or two points, for example.

\Cref{thm:main:converse:intro} also extends to the setting of pseudoholomorphic curves. To this end, we consider a sympletic manifold $N$ endowed with an almost complex structure $J$ tamed by the symplectic form $\omega_{sym}$. A Riemannian structure on such a manifold is \emph{compatible} if the Riemannian tensor is the one canonically defined by the pair $( \omega_{sym}, J )$; see \cite[Section 2.1 and equation (2.1.1)]{McD:Sal:04} for the precise definitions.
\begin{theorem}\label{cor:pseudoholomorphic}
Let $M$ be an oriented Riemannian $2$-manifold, and let $N$ be a symplectic manifold with an almost complex structure $J$ such that the compatible Riemannian structure has its sectional curvature bounded from above and injectivity radius bounded from below. Then a closed set $E \subset M$ removable for $W^{1,2}$-functions is removable for pseudoholomorphic curves $M \setminus E \rightarrow N$ having finite energy on precompact open subsets of $M$.
\end{theorem}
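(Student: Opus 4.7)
The plan is to deduce this from \Cref{thm:main:converse:intro} applied with $n = 2$, $K = 1$, and $\omega = \omega_{sym}$, after identifying pseudoholomorphic curves with $1$-quasiregular $\omega_{sym}$-curves.

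First I would verify that $\omega_{sym}$ satisfies the hypotheses on $\omega$ in \Cref{thm:main:converse:intro}. Closedness is part of the definition of a symplectic form, so the remaining task is to compute the comass with respect to the compatible Riemannian metric $g$ on $N$, given by $g(\cdot,\cdot) = \omega_{sym}(\cdot, J\cdot)$. Since $J$ is a $g$-isometry on each tangent space, Cauchy--Schwarz yields $|\omega_{sym}(v \wedge w)| = |g(v, Jw)| \leq 1$ for every orthonormal pair $(v,w)$ in $T_x N$, with equality when $w = Jv$. Hence $\|\omega_{sym}\|(x) = 1$ for every $x \in N$, so $\omega_{sym}$ is simultaneously bounded and strongly nonvanishing.

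Next I would establish that a continuous $f \in W^{1,2}_{loc}(M \setminus E, N)$ is pseudoholomorphic, i.e.\ satisfies $Df \circ j = J \circ Df$ almost everywhere, if and only if it is a $1$-quasiregular $\omega_{sym}$-curve. For the forward direction, pick any oriented orthonormal basis $(e_1, e_2 = je_1)$ on $T_xM$; then $Df(e_1)$ and $Df(e_2) = JDf(e_1)$ are $g$-orthogonal of equal length, so $\|Df(x)\| = \|Df(e_1)\|$ and $\star f^{\star}\omega_{sym}(x) = \omega_{sym}(Df(e_1) \wedge JDf(e_1)) = \|Df(e_1)\|^2$. For the converse, I would write $Df(x)$ via its singular value decomposition with singular values $\sigma_1 \geq \sigma_2 \geq 0$; since $\omega_{sym}$ has comass $1$, one obtains $\star f^{\star}\omega_{sym}(x) \leq \sigma_1 \sigma_2$ and $\|Df(x)\|^2 = \sigma_1^2$, so equality forces $\sigma_1 = \sigma_2$ together with the image pair calibrating $\omega_{sym}$, which amounts to $Df(x)$ being $\mathbb{C}$-linear.

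With these two ingredients in place the corollary follows directly. Given a pseudoholomorphic $f \colon M \setminus E \to N$ with finite energy on precompact open subsets of $M$, viewing $f$ as a $1$-quasiregular $\omega_{sym}$-curve and applying the first assertion of \Cref{thm:main:converse:intro} extends $f$ to a $1$-quasiregular $\omega_{sym}$-curve $F \colon M \to N$, which is pseudoholomorphic by the equivalence above. The only subtle point is the converse in the equivalence, but this is a purely pointwise linear algebra statement about $Df(x)$ requiring no additional PDE theory, so I do not anticipate any serious obstacle beyond this bookkeeping.
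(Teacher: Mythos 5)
Your route is the same as the paper's: both deduce the statement from \Cref{thm:main:converse:intro} with $K=1$ and $\omega=\omega_{sym}$, after identifying pseudoholomorphic curves with $1$-quasiregular $\omega_{sym}$-curves. Your comass computation and the pointwise Wirtinger-type equivalence (equality in $\sigma_1^2\leq\star f^{\star}\omega_{sym}\leq\sigma_1\sigma_2$ forcing $Df(x)$ to be conformal onto a calibrated, i.e. $J$-complex, plane) are correct for a compatible metric $g=\omega_{sym}(\cdot,J\cdot)$, and they make explicit what the paper leaves to a citation.

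The genuine gap is your closing claim that ``no additional PDE theory'' is needed. A pseudoholomorphic curve is, in the standard sense used here (following \cite{McD:Sal:04}), a smooth solution of $Df\circ j=J\circ Df$, whereas the extension $F$ produced by \Cref{thm:main:converse:intro} is a priori only a continuous $W^{1,2}_{loc}$ map satisfying the Cauchy--Riemann equation almost everywhere. Concluding that $F$ is actually a pseudoholomorphic curve requires elliptic regularity for the nonlinear Cauchy--Riemann operator --- precisely the ingredient the paper invokes, namely \cite[Theorem B.4.1]{McD:Sal:04} (applicable since $F$ is continuous and, by the higher integrability results such as \Cref{thm:pushforward}, lies in $W^{1,p}_{loc}$ for some $p>2$). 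So the regularity step you dismiss is the one nontrivial input of the paper's one-line proof; without it your argument only shows that $F$ is weakly pseudoholomorphic. A second, smaller point: the paper's hypotheses are phrased for $J$ tamed by $\omega_{sym}$ with the canonical metric of \cite[equation (2.1.1)]{McD:Sal:04}, and your comass-one computation uses genuine compatibility; in the merely tamed case one only gets $\|\omega_{sym}\|\geq 1$ pointwise, so if that generality is intended the boundedness of the comass and the value of $K$ need a separate check.
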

The classes of pseudoholomorphic curves $M \rightarrow N$ and the $1$-quasiregular $\omega_{sym}$-curves coincide in the setting of \Cref{cor:pseudoholomorphic} by standard regularity theory for non-linear Cauchy--Riemann equations \cite[Theorem B.4.1]{McD:Sal:04}, so \Cref{cor:pseudoholomorphic} is immediate from \Cref{thm:main:converse:intro}. See \cite[Section 4.5]{McD:Sal:04} and references therein for related removability results for pseudoholomorphic curves. Similar removability statement can be formulated for the Smith maps \cite{Smi:11,Che:Kari:Mad:20}.

The removability of closed discrete sets is important in the study of the compactified moduli spaces of pseudoholomorphic curves, see e.g. the Gromov compactness theorem \cite{Grom:85,Ye:94} and the Gromov--Witten invariants \cite{Wit:91,Rua:Tia:94,McD:Sal:94} for pseudoholomorphic curves. \Cref{thm:main:converse:intro} is one of the early technical tools needed for defining such invariants for higher dimensional curves.

\subsection{Geometric and topological consequences}\label{sec:geo:top:consi}

To motivate our subsequent application of the removability theorem, \Cref{thm:main:converse:intro}, we recall a growth result for quasiregular mappings by Bonk and Heinonen, cf. \cite[Theorem 1.11]{Bo:He:01}: 
\begin{proposition}\label{prop:Bo:He:weak}
If $f \colon \mathbb{R}^n \rightarrow N$ is a non-constant quasiregular mapping and $N$ is a closed, connected, and oriented Riemannian $n$-manifold that is not a homology sphere, then $f$ has \emph{infinite energy}.
\end{proposition}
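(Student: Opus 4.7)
The plan is to argue by contrapositive: assuming $f \colon \R^n \to N$ is a non-constant quasiregular mapping with $\int_{\R^n} \|Df\|^n < \infty$, I would force $N$ to be a rational homology sphere. The whole argument rests on the removability theorem proved earlier, namely \Cref{cor:quasiregular}, applied at a single point.

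First I would pass to the sphere by identifying $\R^n$ with $\S^n \setminus \{\infty\}$ via stereographic projection, so that $f$ becomes a $K$-quasiregular mapping $\S^n \setminus \{\infty\} \to N$. The Euclidean and round metrics on $\S^n \setminus \{\infty\}$ are conformally equivalent, and the Dirichlet $n$-energy is conformally invariant, so the finite total energy hypothesis translates to finite energy on precompact open subsets of $\S^n$. The singleton $\{\infty\}$ is removable for $W^{1,n}$-functions, and $N$ is automatically complete with sectional curvature bounded from above because it is closed. Consequently \Cref{cor:quasiregular} yields a quasiregular extension $F \colon \S^n \to N$.

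Now I would exploit the topological degree. The extension $F$ is non-constant since its restriction is, and non-constant quasiregular mappings are open and discrete by Reshetnyak's theorem; hence $F( \S^n )$ is both open and compact in the connected manifold $N$, so $F$ is surjective and has nonzero topological degree $d$. The transfer homomorphism in rational singular cohomology satisfies $\tau_F \circ F^{\star} = d \cdot \mathrm{id}_{H^{\star}(N;\mathbb{Q})}$, so $F^{\star} \colon H^{\star}(N;\mathbb{Q}) \hookrightarrow H^{\star}(\S^n;\mathbb{Q})$ is injective. Since the intermediate cohomology of $\S^n$ vanishes, so does that of $N$, making $N$ a rational homology sphere and contradicting the standing hypothesis.

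The main substantive input is the removability theorem \Cref{cor:quasiregular}, which itself relies on the Caccioppoli estimate \Cref{thm:caccioppoli:estimate}; the subsidiary verifications — conformal invariance of the $n$-energy, $W^{1,n}$-removability of a point, and the nonvanishing of the degree of a non-constant quasiregular map between closed connected oriented $n$-manifolds — are classical, so I do not expect a substantive obstacle beyond correctly deploying \Cref{cor:quasiregular}.
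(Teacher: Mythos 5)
Your proof is correct, but it takes a genuinely different route from the way this statement is handled in the paper. The paper itself does not reprove \Cref{prop:Bo:He:weak}: it quotes Bonk--Heinonen and instead recovers and generalizes it through \Cref{cor:kunnethideal} and \Cref{cor:stereographic}, whose proof shares your first step (pass to $\mathbb{S}^n$, note that a point is $W^{1,n}$-removable, and extend by the removability theorem \Cref{thm:main:converse:intro}, of which \Cref{cor:quasiregular} is the equidimensional case) but then diverges: rather than degree theory, the paper writes $[\mathrm{vol}_N]$ as an element of the bounded K\"unneth ideal (possible exactly when $N$ is not a de Rham homology sphere, by Poincar\'e duality), pulls the decomposition back under $F$, uses the vanishing of the conformal cohomology of $\mathbb{S}^n$ in intermediate degrees to see that $F^{\star}\mathrm{vol}_N$ is weakly exact, and concludes by Stokes that the energy vanishes, so $F$ is constant. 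Your second half instead uses Reshetnyak's theorem, surjectivity, nonvanishing degree, and the transfer homomorphism to force $N$ to be a rational homology sphere; this is a clean classical argument, but it is intrinsically tied to the equidimensional case $m=n$, whereas the paper's cohomological argument is designed to survive in the curve setting $m>n$, where there is no degree. Both arguments establish the statement under the reading ``homology sphere'' $=$ rational (de Rham) homology sphere, which is the reading forced by the paper's own K\"unneth-ideal criterion and by the original Bonk--Heinonen theorem, so your conclusion matches the intended hypothesis; apart from that terminological point, the subsidiary verifications you list (conformal invariance of the $n$-energy, $W^{1,n}$-removability of a point, positivity of the local index and hence of the degree) are all standard and correctly deployed.
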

\Cref{prop:Bo:He:weak} was recently used by Heikkilä and Pankka \cite{Hei:Pan:23} (see also \cite{Pry:19}) to classify closed, connected, simply connected and oriented Riemannian $4$-manifolds admitting a non-constant quasiregular mapping from $\mathbb{R}^4$. \Cref{thm:main:converse:intro} leads to an analog of \Cref{prop:Bo:He:weak} for quasiregular curves under a further assumption on the form.
\begin{theorem}\label{cor:kunnethideal}
Let $N$ be an oriented Riemannian $n$-manifold with sectional curvature bounded from above and injectivity radius bounded from below, $m \geq n \geq 2$, and that $E \subset \mathbb{S}^n$ is a closed set removable for $W^{1,n}$-functions.

Suppose that $\omega \in \Omega^{n}( N )$ is strongly nonvanishing, closed, bounded, and $[ \omega ] \in \mathcal{K}^n_b(N)$, and $f \colon \mathbb{S}^n \setminus E \rightarrow N$ is a non-constant $K$-quasiregular $\omega$-curve. Then $f$ has infinite energy.
\end{theorem}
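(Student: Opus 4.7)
The plan is to argue by contradiction. Suppose $f$ has finite energy on $\mathbb{S}^n \setminus E$. Since $E$ is removable for $W^{1,n}$-functions and $N$ satisfies the curvature hypotheses of \Cref{thm:main:converse:intro}, the first conclusion of that theorem extends $f$ to a $K$-quasiregular $\omega$-curve $F \colon \mathbb{S}^n \to N$. Writing $c := \inf_{N}\|\omega\| > 0$ and integrating the defining inequality \eqref{eq:quasiregular:curve} over the closed manifold $\mathbb{S}^n$ yields
\begin{equation*}
    c\int_{\mathbb{S}^n} \|DF\|^n \,\df\vol
    \leq
    K \int_{\mathbb{S}^n} F^{\star}\omega.
\end{equation*}
Hence it suffices to show $\int_{\mathbb{S}^n} F^{\star}\omega = 0$: this forces $DF = 0$ almost everywhere, so $F$, and in particular $f$, is constant, contradicting the non-constancy hypothesis.

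To show the vanishing of the integral, the plan is to exploit the Künneth hypothesis $[\omega] \in \mathcal{K}^n_b(N)$. By definition this class is a finite sum of cup products $[\alpha_i]_b \smile [\beta_i]_b$ of bounded classes of positive degrees strictly less than $n$; at the cochain level,
\begin{equation*}
    \omega = \sum_i \alpha_i \smile \beta_i + \delta \xi
\end{equation*}
with $\xi$ bounded. Pulling back by $F$ gives $F^{\star}\omega = \sum_i F^{\star}\alpha_i \smile F^{\star}\beta_i + \delta F^{\star}\xi$. Since $\mathbb{S}^n$ is simply connected, Gromov's vanishing theorem gives $H^k_b(\mathbb{S}^n) = 0$ for every $k \geq 1$, so each $F^{\star}\alpha_i$ is a bounded coboundary $\delta u_i$. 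The Leibniz identity $\delta u_i \smile F^{\star}\beta_i = \delta(u_i \smile F^{\star}\beta_i)$ (valid since $F^{\star}\beta_i$ is a cocycle) then exhibits $F^{\star}\omega$ itself as a bounded singular coboundary $\delta \tau$ on $\mathbb{S}^n$. Pairing with the fundamental class and using $\partial [\mathbb{S}^n] = 0$ yields $\langle \delta \tau, [\mathbb{S}^n] \rangle = 0$.

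The remaining step is to identify this singular-cochain pairing with the integral $\int_{\mathbb{S}^n} F^{\star}\omega$. Here the plan is to invoke a Sobolev de Rham theorem: by \Cref{thm:pushforward}, $F \in W^{1,s}_{loc}$ for some $s > n$, so $F^{\star}\omega$ is a well-defined $L^{s/n}$-form; smooth approximations $F_j \to F$ in $W^{1,n}$ (using a tubular neighbourhood retract of $F(\mathbb{S}^n)$ in $N$, available thanks to the sectional curvature bound) satisfy $\int F_j^{\star}\omega \to \int F^{\star}\omega$, and for smooth maps the classical identification between integration of forms and the singular/de Rham pairing applies. Passing to the limit yields the desired $\int_{\mathbb{S}^n} F^{\star}\omega = 0$.

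The main obstacle is precisely this last identification: carefully matching the bounded singular cochain formalism in which the Künneth ideal is phrased with the $L^1$-form calculus needed to make sense of $F^{\star}\omega$ for a Sobolev, not smooth, map $F$ into $N$, together with the smooth approximation of $F$ by maps landing in $N$. Once this matching is in place the contradiction above concludes the proof, and the Künneth condition on $\omega$ has been used only through the vanishing of bounded cohomology of the simply connected target $\mathbb{S}^n$.
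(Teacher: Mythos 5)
Your overall skeleton agrees with the paper: assume finite energy, use \Cref{thm:main:converse:intro} to extend $f$ to a $K$-quasiregular $\omega$-curve $F \colon \mathbb{S}^n \to N$, integrate the distortion inequality, and reduce everything to showing $\int_{\mathbb{S}^n} F^{\star}\omega = 0$ via the Künneth decomposition of $\omega$ and the triviality of the cohomology of $\mathbb{S}^n$ in degrees $1,\dots,n-1$. However, the execution of that key vanishing step has a genuine gap. The Künneth ideal in this paper is defined at the level of differential forms ($\omega = d\eta + \sum_i \alpha_i \wedge \beta_i$ with bounded forms), not of bounded singular cochains, and your translation into Gromov-type bounded cohomology is where things break: the pullbacks $F^{\star}\alpha_i$ are only $L^{n/n_i}$-forms defined almost everywhere, so they are not bounded singular cochains, they cannot be evaluated on singular simplices, and the cup-product/coboundary manipulations $\delta u_i \smile F^{\star}\beta_i = \delta(u_i \smile F^{\star}\beta_i)$ are not defined for them. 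You acknowledge that matching the cochain pairing with the $L^1$-integral $\int_{\mathbb{S}^n} F^{\star}\omega$ is ``the main obstacle,'' but that matching is exactly the content of the step, and your proposed fix --- smooth approximation $F_j \to F$ in $W^{1,n}$ via a tubular neighbourhood retract --- is itself only sketched: an upper sectional curvature bound does not give a uniform tubular neighbourhood of a complete noncompact $N$ (one would instead have to use compactness of $F(\mathbb{S}^n)$ together with an embedding and nearest-point projection near the image), and the continuity of $F \mapsto \int F^{\star}\omega$ under $W^{1,n}$ convergence also needs an argument. Note also that if the smoothing step were actually carried out, the whole detour through bounded singular cohomology would be unnecessary, since on the compact domain $\mathbb{S}^n$ ordinary de Rham cohomology already kills the classes $[F_j^{\star}\alpha_i]$; boundedness plays no role there.

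The paper avoids both the cochain formalism and any smooth approximation by staying entirely in the weak (Sobolev) form calculus: $F^{\star}\alpha_i$ and $F^{\star}\beta_i$ are weakly closed forms that are $(n/n_i)$- and $(n/(n-n_i))$-integrable, and the vanishing of the \emph{conformal} cohomology of $\mathbb{S}^n$ in degrees $1,\dots,n-1$ (Kangasniemi--Pankka) supplies primitives $\sigma_i$ with exactly the integrability $n/(n_i-1)$ needed so that $\sigma_i \wedge F^{\star}\beta_i$ is integrable and $F^{\star}\alpha_i \wedge F^{\star}\beta_i = d(\sigma_i \wedge F^{\star}\beta_i)$ holds weakly; hence $F^{\star}\omega$ is weakly exact and its integral vanishes by the Stokes-type argument for Sobolev mappings. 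To repair your argument you would either have to import such a Sobolev de Rham/conformal cohomology statement, or carry out the density and continuity claims in full; as written, the central identity $\int_{\mathbb{S}^n} F^{\star}\omega = 0$ is not established.
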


A bounded and closed $n$-form $\omega$ induces an equivalence class in the \emph{bounded de Rham cohomology} $H^{n}_{b}( N )$ and we say that $[\omega]$ is in the $n$'th layer $\mathcal{K}^{n}_b(N)$ of the \emph{Künneth ideal} $\mathcal{K}^\star_b( N )$ if
\begin{equation*}
    \omega = d\eta + \sum_{ i = 1 }^{ l } \alpha_i \wedge \beta_i,
\end{equation*}
where $\eta$ and $d\eta$ are bounded and the pair $( \alpha_i, \beta_i)$ consists of bounded and closed $n_i$- and $(n-n_i)$-forms, respectively, for $1 \leq n_i \leq n-1$ and $i = 1,2,\dots,l$ for some $l \in \mathbb{N}$. The nomenclature follows \cite[p. 3]{Heik:23} and is motivated by the Künneth theorem. Observe that if $N$ is $n$-dimensional, oriented, and closed, then the cohomology class of the Riemannian volume form of $N$ is in $\mathcal{K}^{n}_{b}( N )$ if and only if $N$ is not the homology sphere.

A topological perspective on \Cref{cor:kunnethideal} is provided by the following reformulation.
\begin{corollary}\label{cor:kunnethideal:topological}
Let $N$ be an oriented Riemannian $n$-manifold with sectional curvature bounded from above and injectivity radius bounded from below, $2 \leq n \leq m$, and that $E \subset \mathbb{S}^n$ is a closed set removable for $W^{1,n}$-functions.

Suppose that $\omega \in \Omega^{n}( N )$ is strongly nonvanishing, closed, and bounded, and $f \colon \mathbb{S}^n \setminus E \rightarrow N$ is a non-constant $K$-quasiregular $\omega$-curve of finite energy. Then $[ \omega ] \in H^{n}_{b}( N ) \setminus K^{n}_{b}( N )$.
\end{corollary}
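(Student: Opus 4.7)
The plan is to obtain Corollary~\ref{cor:kunnethideal:topological} as a direct contrapositive reformulation of Theorem~\ref{cor:kunnethideal}; the only nontrivial bookkeeping is verifying that $\omega$ defines a class in the bounded de Rham cohomology $H^{n}_b(N)$ under the stated hypotheses, so that the statement ``$[\omega]\in H^{n}_b(N)\setminus \mathcal{K}^{n}_b(N)$'' is meaningful in the first place.

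First I would note that by assumption $\omega\in \Omega^{n}(N)$ is bounded and closed, so $\omega$ is a bounded cocycle in the complex computing bounded de Rham cohomology, and hence determines a well-defined class $[\omega]\in H^{n}_b(N)$. This takes care of the ambient-class assertion in the conclusion of Corollary~\ref{cor:kunnethideal:topological} using only the definitions recalled just before the statement, together with the standing assumptions on $\omega$ (strong nonvanishing is not needed for this step, only boundedness and closedness).

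Next I would argue the set-theoretic exclusion $[\omega]\notin \mathcal{K}^{n}_b(N)$ by contradiction. Assume that $[\omega]\in \mathcal{K}^{n}_b(N)$. Then the hypotheses of Theorem~\ref{cor:kunnethideal} are satisfied verbatim: $N$ is a complete Riemannian $m$-manifold with sectional curvature bounded from above, $m\ge n\ge 2$, $E\subset \mathbb{S}^n$ is closed and removable for $W^{1,n}$-functions, $\omega$ is strongly nonvanishing, closed, and bounded with $[\omega]\in \mathcal{K}^{n}_b(N)$, and $f\colon \mathbb{S}^n\setminus E\to N$ is a non-constant $K$-quasiregular $\omega$-curve. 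Theorem~\ref{cor:kunnethideal} then forces $f$ to have infinite energy, which contradicts the finite-energy hypothesis on $f$. Hence no such class representation is possible, and $[\omega]\notin \mathcal{K}^{n}_b(N)$, which combined with the previous step yields $[\omega]\in H^{n}_b(N)\setminus \mathcal{K}^{n}_b(N)$.

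There is really no technical obstacle in this corollary beyond checking that the finite-energy hypothesis of Corollary~\ref{cor:kunnethideal:topological} and the assumption $[\omega]\in \mathcal{K}^{n}_b(N)$ of Theorem~\ref{cor:kunnethideal} are logically complementary given the other common hypotheses; all the substantive analytic work (namely the removability theorem \Cref{thm:main:converse:intro} and the Künneth-ideal energy-growth argument) is already absorbed into Theorem~\ref{cor:kunnethideal}. Thus I would keep the proof to a short paragraph and explicitly present it as the contrapositive of Theorem~\ref{cor:kunnethideal}, with the single additional observation that a bounded closed $n$-form automatically represents a class in $H^{n}_b(N)$.
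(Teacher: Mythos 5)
Your proposal is correct and matches the paper's proof, which simply states that the corollary is a reformulation (contrapositive) of \Cref{cor:kunnethideal}. Your extra remark that boundedness and closedness of $\omega$ already guarantee $[\omega]\in H^{n}_b(N)$ is a harmless clarification of the same argument.
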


We have the following consequence of \Cref{cor:kunnethideal:topological}, removing the infinite-energy assumption from \cite[Theorem 1.2]{Heik:23}.
\begin{corollary}\label{cor:stereographic}
Let $N$ be an oriented and closed Riemannian $m$-manifold, and $m \geq n \geq 2$. Suppose that $\omega \in \Omega^{n}( N )$ is strongly nonvanishing, closed, bounded, and $[ \omega ] \in \mathcal{K}^n_b(N)$, and $f \colon \mathbb{R}^n \rightarrow N$ is a non-constant $K$-quasiregular $\omega$-curve. Then $f$ has infinite energy.
\end{corollary}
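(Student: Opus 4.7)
The plan is to deduce \Cref{cor:stereographic} directly from \Cref{cor:kunnethideal} by identifying the domain $\mathbb{R}^n$ with the punctured sphere $\mathbb{S}^n\setminus\{p_0\}$ via stereographic projection and taking $E=\{p_0\}$. The hypotheses on the target that appear in \Cref{cor:kunnethideal} hold automatically in the present setting: because $N$ is closed, it is complete, and its sectional curvature, being continuous on the compact Grassmannian bundle of tangent $2$-planes of $N$, is bounded from above. The form $\omega$ already satisfies the remaining assumptions verbatim, and a single point of $\mathbb{S}^n$ has vanishing $n$-capacity and is therefore removable for $W^{1,n}$-functions.

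The only substantive step is to transfer $f$ through a stereographic projection $\sigma\colon\mathbb{S}^n\setminus\{p_0\}\to\mathbb{R}^n$, which is a conformal diffeomorphism. The defining inequality $(\|\omega\|\circ f)\|Df\|^n\le K\star f^{\star}\omega$ of a $K$-quasiregular $\omega$-curve and the $n$-energy $\int\|Df\|^n\,\mathrm{d}\vol$ are both invariant under conformal changes of the source metric on an $n$-dimensional manifold: a rescaling $g\mapsto\lambda^2 g$ multiplies $\|Df\|^n$ and the Hodge dual of any top-degree form by the same factor $\lambda^{-n}$ while scaling the volume form by $\lambda^n$. Hence the two sides of the quasiregularity inequality scale identically and the integrand of the $n$-energy is preserved, so $f\circ\sigma\colon\mathbb{S}^n\setminus\{p_0\}\to N$ is a non-constant $K$-quasiregular $\omega$-curve whose total energy equals that of $f$. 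Applying \Cref{cor:kunnethideal} to $f\circ\sigma$ then forces this common energy to be infinite.

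The argument contains no genuine obstacle; the one place deserving care is the verification that both the pointwise quasiregularity condition and the global $n$-energy are invariant under conformal rescalings of the source, which comes down to the fact that in dimension $n$ the Hodge dual of a top-degree form and the pointwise norm $\|\cdot\|^n$ of a linear map transform by the same power of the conformal factor, and this power is precisely compensated by the volume form when integrating.
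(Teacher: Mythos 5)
Your proposal is correct and follows essentially the same route as the paper, which deduces \Cref{cor:stereographic} from \Cref{cor:kunnethideal} (equivalently \Cref{cor:kunnethideal:topological}) by precomposing $f$ with the stereographic projection and using that a point is removable for $W^{1,n}$-functions. Your verification of the conformal invariance of the distortion inequality and of the $n$-energy, and of the curvature/completeness hypotheses for closed $N$, fills in exactly the details the paper leaves implicit.
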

\Cref{cor:stereographic} follows from \Cref{cor:kunnethideal:topological} by precomposing the mapping $f$ with the stereographic projection; observe that \Cref{cor:kunnethideal} extends \Cref{prop:Bo:He:weak}.

The proofs of the main theorems can be found in \Cref{sec:mainresults} while Sections \ref{sec:caccio}, \ref{sec:removable}, and \ref{sec:iso} consist of the necessary preparatory work.

\section{Caccioppoli inequality}\label{sec:caccio}
Consider $K \geq 1$ and a calibration $\omega \in \Lambda^n \mathbb{R}^m$. Define $1 \leq q_\omega < p_\omega < \infty$ as the minima and maxima, respectively, for which
\begin{equation*}
    K
    \lambda(n)
    \|\omega\|_{ \ell^1 }
    \left| 1 - \frac{n}{p} \right|
    <
    1.
\end{equation*}
More explicitly, we have
\begin{align}\label{eq:sharpexponents}
    q_\omega =  \frac{ K \lambda(n) \| \omega \|_{ \ell^1 }  }{ K \lambda(n) \|\omega\|_{ \ell^1 } + 1 }n
    \quad\text{and}\quad
    p_\omega = \frac{ K \lambda(n) \|\omega\|_{ \ell^1 } }{ K \lambda(n) \|\omega\|_{ \ell^1 } - 1 }n.
\end{align}
The exponents in \eqref{eq:sharpexponents} are the same as in \eqref{eq:sharpexponents:intro} but we use the above simplified notation in this section. The exponents appear in the following proposition.
\begin{proposition}[Caccioppoli inequality]\label{prop:caccioppoli:estimate}
Consider $K \geq 1$, a calibration $\omega \in \Lambda^{n} \mathbb{R}^m$ and $s \in ( q_\omega, p_\omega )$. Suppose that $h \in W^{1,s}_{loc}( \Omega, \mathbb{R}^m )$ satisfies
\begin{equation*}
    \|Dh\|^n
    \leq
    K \star h^{\star}\omega
    \quad\text{almost everywhere}.
\end{equation*}
Then, for every nonnegative $\phi \in \mathcal{C}^{\infty}_c( \Omega )$ and $c \in \mathbb{R}$, 
\begin{align*}
    \| \phi D(h-c) \|_{ L^{s}( \Omega, \mathbb{R}^{ m \times n } ) }^s
    &\leq
    C
    \| (h-c) \otimes D\phi \|_{ L^{s}( \Omega, \mathbb{R}^{ m \times n } ) }^s
\end{align*}
for a constant $C$ depending only on $s, n, K$ and $\|\omega\|_{ \ell^1 }$; the constant have a singularity at the end points $s \rightarrow q_\omega$ and $s \rightarrow p_\omega$ but remain bounded in compact subintervals of $( q_\omega, p_\omega )$.
\end{proposition}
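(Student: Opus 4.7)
The plan is to follow the Hodge-theoretic strategy of Iwaniec--Martin for quasiregular mappings (\cite[Theorem 14.4.1]{Iw:Ma:01}), adapting it to the higher-dimensional target and the multiple components of the calibration. First I would reduce to $c = 0$: since $\omega \in \Lambda^n \mathbb{R}^m$ has constant coefficients, $(h-c)^{\star}\omega = h^{\star}\omega$ and $D(h-c) = Dh$, so replacing $h$ by $h-c$ is harmless. Expanding $\omega = \sum_I \omega_I \, dx^I$ in the standard basis for $\Lambda^n \mathbb{R}^m$, the distortion inequality reads
\begin{equation*}
    \|Dh\|^n
    \leq
    K \sum_I \omega_I \star h^{\star}dx^{I},
\end{equation*}
and for each ordered multi-index $I = (i_1 < \dots < i_n)$, Stokes' theorem applied to the compactly supported $(n-1)$-form $\phi^{s} h^{i_1} \, dh^{i_2} \wedge \dots \wedge dh^{i_n}$ yields
\begin{equation*}
    \int_\Omega \phi^s \, dh^{i_1} \wedge \dots \wedge dh^{i_n}
    =
    -s \int_\Omega \phi^{s-1} h^{i_1} \, d\phi \wedge dh^{i_2} \wedge \dots \wedge dh^{i_n}.
\end{equation*}
Since the $(n-1)$-fold wedge on the right is dominated pointwise by $\|Dh\|^{n-1}$, a straightforward Hölder pairing already delivers the Caccioppoli inequality in the endpoint case $s = n$.

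For $s \neq n$ the direct Hölder pairing fails, and this is where the Iwaniec--Sbordone $L^p$-Hodge decomposition enters. Following the nonlinear commutator strategy of \cite[Chapter 14]{Iw:Ma:01}, I would apply the transformation $Dh \mapsto |Dh|^{(s-n)/n} Dh$---equivalently, decompose the pullback $n$-form $h^{\star}dx^I$ at the $L^{s/n}$ level---and split the resulting form into an exact and a coexact part. The exact part, after one more integration by parts, contributes a multiple of $\int_\Omega \phi^s \|Dh\|^s$ itself; the coexact part is dominated, via Hölder, by $\|(h-c) \otimes D\phi\|_s^s$. The Iwaniec--Sbordone commutator estimate supplies the operator norm $\lambda(n) |1 - n/s|$ on each summand indexed by $I$, and weighting by $|\omega_I|$ and summing produces the absorbing coefficient $K \lambda(n) \|\omega\|_{\ell^1} |1 - n/s|$ on the left-hand side. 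The hypothesis $s \in (q_\omega, p_\omega)$ is precisely the statement that this coefficient is strictly less than one, so the exact part can be absorbed, leaving the coexact remainder and hence the desired inequality with a constant that blows up as $s \to q_\omega$ or $s \to p_\omega$.

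The principal obstacle is the interplay between the multi-index summation and the Hodge commutator, and this is where the \enquote{careful modifications} alluded to after the theorem come in. When $m = n$, as in \cite{Iw:Ma:01}, there is a single multi-index and the Hodge decomposition can be performed on the pullback of the volume form directly. When $m > n$ one cannot treat $h$ as an equidimensional map, so each of the $\binom{m}{n}$ pairings $h^{i_1} \, d\phi \wedge dh^{i_2} \wedge \dots \wedge dh^{i_n}$ must be decomposed separately, contributing a factor of $\|\omega\|_{\ell^1}$ after summation; the same factor already arises in the Onninen--Pankka Caccioppoli inequality at the natural exponent $s = n$ \cite{Onn:Pan:21}. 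A secondary technicality is that $h$ lies only in $W^{1,s}_{loc}$ with possibly $s < n$, so products like $dh^{i_2} \wedge \dots \wedge dh^{i_n}$ need not be locally integrable a priori; this is resolved via the pointwise bound $|h^{\star}\omega| \leq K^{-1} \|Dh\|^n$ together with a mollification and limiting argument.
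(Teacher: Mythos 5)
Your proposal takes essentially the same route as the paper: both adapt the Iwaniec--Martin Caccioppoli argument, producing the absorbable coefficient $K\lambda(n)\|\omega\|_{\ell^1}\left|1-\tfrac{n}{s}\right|$ by running the Hodge-decomposition/commutator machinery on each coordinate projection $h_I=\pi_I\circ h$ separately and summing with the weights $|\omega_I|$, then absorbing precisely because $s\in(q_\omega,p_\omega)$. The paper only packages this differently---it multiplies by the cutoff first, setting $H=\phi h$, derives a pointwise inhomogeneous distortion inequality for $H$, and then applies a compact-support fundamental inequality for calibrations (with integrability handled by the normalized per-component weights as in \cite[p.~352]{Iw:Ma:01} rather than by mollification)---but the substance agrees with your sketch.
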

We postpone the proof of \Cref{prop:caccioppoli:estimate} until the end of the section as we need to establish preliminary results.
\begin{proposition}[Inhomogeneous distortion inequality]\label{prop:inhomogeneous:distortion:modified}
Consider $K \geq 1$, a calibration $\omega \in \Lambda^{n} \mathbb{R}^m$ and $p \in ( 1, \infty )$. Suppose that $h \in W^{1,p}_{loc}( \Omega, \mathbb{R}^m )$ satisfies
\begin{equation*}
    \|Dh\|^n
    \leq
    K \star h^{\star}\omega
    \quad\text{almost everywhere}.
\end{equation*}
If $\phi \in C^{1}( \Omega )$ is nonnegative and $H = \phi h$, then $H$ satisfies the inhomogeneous distortion inequality
\begin{align*}
    \| DH \|^{p}
    \leq
    K \| D H \|^{p-n}\star H^{\star}\omega
    +
    G
    \quad\text{almost everywhere},
\end{align*}
where
\begin{align}\label{eq:inhom:2}
    |G|
    \leq
    &KC(p,n)
    ( \| DH \| + \|h \otimes D\phi\| )^{p-1}
    \| h \otimes D\phi \|
\end{align}
for the constant $C(p,n) =  p + n+|p-n|$.
\end{proposition}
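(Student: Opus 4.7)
The plan is to argue pointwise almost everywhere throughout. Let me set $A = DH$, $L = Dh$, and $T = h \otimes D\phi$; the product rule gives $A = \phi L + T$ and therefore $\phi L = A - T$. The structural observation on which everything hinges is that every column $T e_i = (\partial_i \phi) h$ is parallel to the single vector $h$, so any $n$-fold wedge involving two or more columns of $T$ automatically vanishes.

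My next move is to derive an algebraic identity for $\star H^{\star}\omega$. Expanding each factor in $\omega(A e_1 \wedge \dots \wedge A e_n)$ as $A e_i = \phi L e_i + T e_i$ and using multilinearity, only the zero-$T$ term and the one-$T$ terms survive, giving
\begin{equation*}
    \star H^{\star}\omega
    =
    \phi^n \star h^{\star}\omega + \Sigma,
    \qquad
    \Sigma \coloneqq \sum_{i=1}^n \omega\bigl( A e_1 \wedge \dots \wedge T e_i \wedge \dots \wedge A e_n \bigr).
\end{equation*}
Since $\omega$ is a calibration its comass equals $1$, so $|\omega(v_1 \wedge \dots \wedge v_n)| \leq \prod \|v_j\|$ together with $\|A e_j\| \leq \|A\|$ and $\|T e_i\| \leq \|T\|$ yields $|\Sigma| \leq n \|A\|^{n-1}\|T\|$. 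Multiplying the distortion inequality for $h$ by $\phi^n \geq 0$ produces $K \phi^n \star h^{\star}\omega \geq \|\phi L\|^n = \|A - T\|^n$, and hence
\begin{equation*}
    K \star H^{\star}\omega \geq \|A - T\|^n + K \Sigma.
\end{equation*}

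The last step is to bound the defect $\|A\|^p - K \|A\|^{p-n}\star H^{\star}\omega$ from above and take $G$ to be its positive part, which automatically satisfies the asserted inequality. At points where $\|A\| = 0$ one has $DH = 0$, so $\star H^{\star}\omega = 0$ and the defect vanishes; at points with $\|A\| > 0$ the display above lets me factor
\begin{equation*}
    \|A\|^p - K \|A\|^{p-n}\star H^{\star}\omega
    \leq
    \|A\|^{p-n}\bigl( \|A\|^n - \|A-T\|^n \bigr) + K \|A\|^{p-n} |\Sigma|.
\end{equation*}
The second summand is at most $Kn \|A\|^{p-1}\|T\| \leq Kn (\|A\|+\|T\|)^{p-1}\|T\|$ since $p \geq 1$. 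For the first summand: if $\|A\| < \|A-T\|$ it is nonpositive, and otherwise the one-variable mean value inequality yields $\|A\|^n - \|A-T\|^n \leq n \|A\|^{n-1}\|T\|$, so the term is at most $n \|A\|^{p-1}\|T\| \leq n(\|A\|+\|T\|)^{p-1}\|T\|$. Summing and using $K \geq 1$ produces the bound $2Kn (\|A\|+\|T\|)^{p-1}\|T\|$, which is dominated by $KC(p,n)(\|A\|+\|T\|)^{p-1}\|T\|$ since $2n \leq p + n + |p-n|$.

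The main subtlety is the range $p \in (1, n)$, where $\|A\|^{p-n}$ is singular as $\|A\| \to 0$. The algebraic identity above is decisive precisely because it forces $\star H^{\star}\omega$ to vanish to order $\|A\|^n$ at $A = 0$, so that the product $\|A\|^{p-n}\star H^{\star}\omega$ remains bounded and the case analysis goes through uniformly for $p > 1$.
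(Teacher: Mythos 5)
Your argument is correct, and it takes a genuinely shorter route than the paper while resting on the same structural fact: $h \otimes D\phi$ has rank at most one, so the multilinear expansion of $\star H^{\star}\omega$ terminates after the terms linear in the perturbation (your writing $\Sigma$ with $DH$ rather than $\phi Dh$ in the remaining slots is legitimate, since any second column of $h \otimes D\phi$ annihilates the wedge). The paper runs the Iwaniec--Martin homotopy argument: it sets $M_\lambda = DH - (1-\lambda)\, h \otimes D\phi$, observes that $\beta(\lambda) = \star M_\lambda^{\star}\omega$ is affine, and bounds the derivative of $G(\lambda) = \|M_\lambda\|^{p} - K\|M_\lambda\|^{p-n}\beta(\lambda)$ in $\lambda$, integrating from $G(0) \leq 0$; differentiating the factor $\|M_\lambda\|^{p-n}$ is exactly what produces the $|p-n|$ contribution to $C(p,n)$, and vanishing of $\|M_\lambda\|$ along the segment requires a separate case analysis. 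You instead compare the two endpoints directly: the identity $\star H^{\star}\omega = \phi^{n}\star h^{\star}\omega + \Sigma$ with $|\Sigma| \leq n\|DH\|^{n-1}\|h \otimes D\phi\|$, the distortion inequality multiplied by $\phi^{n}$, and the scalar mean value inequality for $t \mapsto t^{n}$ to control $\|DH\|^{n} - \|DH - h\otimes D\phi\|^{n}$. Since $\|DH\|^{p-n}$ is never differentiated, you obtain the cleaner constant $n(1+K) \leq 2Kn \leq K C(p,n)$, and the only degenerate case is $\|DH\| = 0$, handled by the same convention $0^{p-n}\cdot 0 = 0$ that the paper makes explicit; choosing $G$ as the positive part of the defect is a perfectly valid reading of the statement, and it is nonnegative, which is all that is used later when the inequality is integrated. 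In short: same key rank-one observation, but an endpoint comparison replaces the path differentiation, with a simpler case analysis and a marginally better constant.
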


\begin{proof}
We only establish the case $p \neq n$ as $p = n$ is simpler and follows from the argument below with minor modifications.

We observe that
\begin{equation*}
    DH = \phi Dh + h \otimes D\phi.
\end{equation*}
Therefore, setting
\begin{equation*}
    M_\lambda = \lambda DH
    + (1-\lambda) ( DH - h \otimes D\phi ),
\end{equation*}
we have that
\begin{align*}
    \beta( \lambda )
    \coloneqq
    \star M_\lambda^{\star}\omega
\end{align*}
is an affine function since $M_1 - M_0 = h \otimes D\phi$ is either rank one or zero. In fact, the identities
\begin{align*}
    \beta( \lambda )
    &=
    \beta(1)
    +
    \sum_{ j = 1 }^{ n }
    (-1)^{j-1} ( 1 - \lambda )
    \omega\left(
        h \otimes D\phi( e_j ) \wedge DH e_1 \wedge \dots \wedge \widehat{ DH e_j } \wedge \dots \wedge DH e_n
    \right)
    \\
    &=
    \beta(1)
    +
    \sum_{ j = 1 }^{ n }
    (-1)^{j-1}
    (1-\lambda)
    \omega\left(
        h \otimes D\phi( e_j ) \wedge M_{\lambda} e_1 \wedge \dots \wedge \widehat{ M_\lambda e_j } \wedge \dots \wedge M_{\lambda} e_n
    \right),
\end{align*}
and
\begin{align*}
    \beta'( \lambda )
    &=
    \sum_{ j = 1 }^{ n }
    (-1)^{j}
    \omega\left(
        h \otimes D\phi( e_j ) \wedge DH e_1 \wedge \dots \wedge \widehat{ DH e_j } \wedge \dots \wedge DH e_n
    \right)
    \\
    &=
    \sum_{ j = 1 }^{ n }
    (-1)^{j}
    \omega\left(
        h \otimes D\phi( e_j ) \wedge M_\lambda e_1 \wedge \dots \wedge \widehat{ M_\lambda e_j } \wedge \dots \wedge M_{\lambda} e_n
    \right)
\end{align*}
hold where $\widehat{v}$ means that the vector is omitted from the wedge product. Then Hadamard's inequality implies
\begin{align*}
    | \beta(\lambda) |
    \leq
    \|M_{\lambda}\|^{n}
    \quad\text{and}\quad
    | \beta'(\lambda) |
    \leq
    n \| h \otimes D\phi \| \| M_\lambda \|^{n-1}.
\end{align*}
We define the function
\begin{equation*}
    G( \lambda )
    =
    \| M_\lambda \|^{ p }
    -
    K \| M_\lambda \|^{ p - n } \beta( \lambda ),
\end{equation*}
where it is understood that if $\|M_\lambda\| = 0$, then $\| M_\lambda \|^{ p - n } \beta( \lambda ) = 0$. We only care about the points where $G(0) \leq 0$, i.e. almost every point of $\Omega$. We fix such an $x \in \Omega$.

We consider the cases where $\| M_\lambda \| = 0$ for some $0 \leq \lambda \leq 1$ and $\| M_\lambda \| > 0$ for every $0 \leq \lambda \leq 1$ separately.

It is clear that if $\| M_\lambda \| = 0$ for some $0 \leq \lambda < 1$, then $DH = (1-\lambda) h \otimes D\phi$ and we have $M_\mu = ( \mu - \lambda ) h \otimes D\phi$ for $0 \leq \mu \leq 1$. So $M_\mu$ has rank at most one everywhere, yielding $\beta \equiv 0$. Hence
\begin{equation*}
    G( 1 ) = \| M_1 \|^{p} = ( 1-\lambda)^p \| h \otimes D\phi \|^p,
\end{equation*}
implying the desired estimate \eqref{eq:inhom:2}. On the other hand, if $\| M_1 \| = 0$, then $G(1) = 0$ and \eqref{eq:inhom:2} clearly holds.

To finish the proof of \eqref{eq:inhom:2}, it remains to consider the case when $\| M_\lambda \| > 0$ for every $0 \leq \lambda \leq 1$. As $G( 0 ) \leq 0$, it suffices to prove
\begin{equation}\label{eq:growthestimate:F}
    | G'(\lambda) |
    \leq
    K
    C(p,n)
    \left( \| M_1 \| + \|M_1-M_0\| \right)^{p-1}
    \| M_1 - M_0 \|
\end{equation}
for almost every $0 < \lambda < 1$ and integrate.

Denoting $\alpha_r( \lambda ) = \| M_\lambda \|^r$ for $r \in \mathbb{R} \setminus \left\{0\right\}$, we have $\alpha_r'( \lambda ) = r ( \alpha_1( \lambda ) )' \alpha_{r-1}( \lambda )$ for almost every $0 < \lambda < 1$ where $| \alpha_1'( \lambda ) | \leq \| M_1 - M_0 \|$ at every point of differentiability of $\alpha_1$. Then, for almost every $0 < \lambda < 1$,
\begin{align*}
    | G'( \lambda ) |
    &\leq
    p
    \|M_1 - M_0\| \alpha_{p-1}(\lambda)
    +
    K
    \alpha_{p-n}( \lambda ) | \beta'(\lambda) |
    \\
    &+
    K
    |p-n| \| M_1-M_0 \| \alpha_{p-n-1}( \lambda ) |\beta( \lambda )|.
\end{align*}
Therefore, by using the inequalities for $|\beta( \lambda )|$ and $|\beta'(\lambda)|$ and recalling that $h\otimes D\phi = M_1 - M_0$, we deduce that
\begin{equation}\label{eq:growthestimate:Fprime}
    |G'(\lambda)|
    \leq
    \left(
        p + K( n+|p-n| )
    \right)
    \|M_1-M_0\| \alpha_{p-1}( \lambda ).
\end{equation}
To obtain \eqref{eq:growthestimate:F} from \eqref{eq:growthestimate:Fprime}, we apply the inequality $\alpha_{p-1}( \lambda ) \leq ( \| M_1 \| + ( \|M_1-M_0\| ) )^{p-1}$.
\end{proof}

We also need the following lemma for the proof of \Cref{prop:caccioppoli:estimate}.
\begin{lemma}\label{lemm:integrationbyparts}
Suppose that $n+1 \leq 2p \leq 3n$, $h \in W^{1,p}( \Omega, \mathbb{R}^m )$ and at least $(m-n)+1$ components of $h$ vanish at the boundary of $\Omega$ in the Sobolev sense. Then
\begin{align}\label{eq:integrationbyparts:1}
    \left|
        \int_\Omega
            \sum_{ I } \omega_I
            \frac{ h^{\star} dx^{I} }{ | dh^{i_1} |^{ \frac{n-p}{n} } \dots | dh^{ i_n } |^{ \frac{n-p}{n} } }
    \right|
    \leq
    C(n)
    \|\omega\|_{ \ell^1 }
    \left| \frac{n-p}{n} \right|
    \int_{\Omega} \|Dh\|^p\,d\mathcal{H}^n
\end{align}
and
\begin{align*}
    \left|
        \int_\Omega \|Dh\|^{p-n} h^{\star}\omega
        -
        \int_\Omega
            \sum_{ I } \omega_I
            \frac{ h^{\star} dx^{I} }{ | dh^{i_1} |^{ \frac{n-p}{n} } \dots | dh^{ i_n } |^{ \frac{n-p}{n} } }
    \right|
    &\leq
    \|\omega\|_{ \ell^1 }
    \left| \frac{n-p}{n} \right|
    \int_\Omega \|Dh\|^p \,d\mathcal{H}^n
\end{align*}
for every calibration $\omega \in \Lambda^{n}\mathbb{R}^m$.
\end{lemma}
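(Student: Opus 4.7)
The plan is to reduce both inequalities to the single-multi-index case $\omega = dx^{I}$, since $\omega = \sum_{I} \omega_I\, dx^I$ and the triangle inequality produces the factor $\|\omega\|_{\ell^1}$.

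\textbf{The second inequality (pointwise estimate).} Setting $A = \|Dh\|$, $A_j = |dh^{i_j}|$ and $\alpha = (p-n)/n$, and using Hadamard's inequality $|h^{\star}dx^{I}| \leq \prod_j A_j$, the claim reduces, after writing $u = (\prod_j A_j)/A^n \in [0,1]$, to the scalar inequality
\[
    u \, \bigl| 1 - u^{\alpha} \bigr| \leq |\alpha|, \qquad u \in [0,1],\ \alpha \in (-1, \infty).
\]
This is elementary calculus: critical point analysis of $g(u) = u - u^{1+\alpha}$ gives the maximum $g(u^{*}) = |\alpha| \cdot (1 + \alpha)^{-(1+\alpha)/\alpha}$, and a sign analysis shows $(1+\alpha)^{-(1+\alpha)/\alpha} \leq 1$ for every $\alpha > -1$; the hypothesis $2p \geq n+1$ keeps $\alpha$ bounded away from $-1$. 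Multiplying by $A^p$ pointwise and integrating produces exactly the stated bound with no dimensional constant.

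\textbf{The first inequality (integration by parts).} The combinatorial input is the pigeonhole principle: since $|I| = n$ and at least $m - n + 1$ components of $h$ lie in the Sobolev zero-trace class, at least one index $i_{j_0} \in I$ satisfies that $h^{i_{j_0}}$ vanishes at $\partial\Omega$ in the Sobolev sense; after reordering assume $j_0 = 1$. Setting $F = \prod_k |dh^{i_k}|^{(p-n)/n}$ and using $d^2 = 0$, write
\[
    F \cdot h^{\star}dx^{I} = F \cdot d\bigl( h^{i_1}\, dh^{i_2} \wedge \dots \wedge dh^{i_n} \bigr),
\]
and integrate by parts. The boundary term vanishes by the trace condition on $h^{i_1}$, yielding
\[
    -\int_\Omega h^{i_1} \, dF \wedge dh^{i_2} \wedge \dots \wedge dh^{i_n}.
\]
Direct differentiation gives $dF = \tfrac{p-n}{n} F \sum_k d|dh^{i_k}|/|dh^{i_k}|$, which is where the desired prefactor $|p-n|/n$ comes from; the remaining integrand is controlled by $\|Dh\|^p$ using an AM--GM/Hadamard estimate, with the sum over $k$ contributing the dimensional constant $C(n)$.

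\textbf{Main obstacle.} The formal computation above is delicate because $d|dh^{i_k}|$ encodes distributional second derivatives of $h$, whereas we only assume $h \in W^{1,p}$. The proper justification proceeds via mollification: approximate $h$ by smooth maps $h_\epsilon$ preserving the trace condition on the chosen components, carry out the integration by parts at the smooth level, then perform a second integration by parts moving the second derivatives back onto the first-derivative factors in the style of a null-Lagrangian identity, and pass to the limit $\epsilon \to 0$. The stated range $n+1 \leq 2p \leq 3n$ is exactly what is needed so that all the products of $|dh_\epsilon^{i_k}|$ raised to the fractional powers appearing in the intermediate identity have uniformly bounded integrability, which both justifies the integration by parts at the $W^{1,p}$ level and allows the passage to the limit.
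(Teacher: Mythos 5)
Your treatment of the second inequality is correct and is essentially the paper's argument: Hadamard's inequality plus the scalar bound $|u-u^{p/n}|\leq|1-p/n|$ on $[0,1]$, applied pointwise with $u=\bigl(\prod_j|dh^{i_j}|\bigr)/\|Dh\|^n$, multiplied by $\|Dh\|^p$ and summed over $I$ with the triangle inequality to produce $\|\omega\|_{\ell^1}$. (Your aside that $2p\geq n+1$ is needed to keep $\alpha$ away from $-1$ is superfluous: the scalar inequality holds for every $p>0$.) The pigeonhole observation that each multi-index $I$ of length $n$ must contain an index whose component vanishes at $\partial\Omega$ is also exactly the combinatorial point the paper uses.

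The gap is in the first inequality, which is the analytic core of the lemma. The paper does not prove it by a direct integration by parts; it reduces to the coordinate projections $h_I=\pi_I\circ h\colon\Omega\to\mathbb{R}^n$ (noting $\star h^\star dx^I$ is the Jacobian of $h_I$, one of whose components vanishes at the boundary) and invokes Iwaniec--Martin's weighted integration-by-parts estimate \cite[Theorem 13.6.1 and (13.68)]{Iw:Ma:01}, replacing $\|Dh_I\|$ by $\|Dh\|$. That cited estimate is precisely what your sketch would have to reproduce, and your plan does not: after your first integration by parts the integrand $h^{i_1}\,dF\wedge dh^{i_2}\wedge\dots\wedge dh^{i_n}$ contains the undifferentiated factor $h^{i_1}$ (not controllable by $\int_\Omega\|Dh\|^p$ with a constant depending only on $n$ and independent of $\Omega$) and, through $dF$, second derivatives of $h$ that do not exist under the hypothesis $h\in W^{1,p}$. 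Your proposed fix --- mollify, integrate by parts a second time ``in the style of a null-Lagrangian identity,'' and pass to the limit --- is exactly where the difficulty lives: the required cancellation producing the small factor $|(n-p)/n|$ together with a dimensional constant $C(n)$ is not obtained by such formal manipulations (if it were, one would not expect the unresolved constant $\lambda(n)$ of Iwaniec--Martin to appear at all); the known proofs go through the $L^p$-Hodge decomposition and stability/nonlinear-commutator estimates for the perturbed fields $|dh^{i}|^{(p-n)/n}dh^{i}$, which is where $C(n)$ (and ultimately $\lambda(n)$) comes from. The claim that the range $n+1\leq 2p\leq 3n$ ``is exactly what is needed'' for uniform integrability of the fractional-power products is asserted, not shown, and in any case integrability alone does not yield the quantitative bound. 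So as written, the first inequality is not proved; you should either carry out the Hodge-decomposition argument or, as the paper does, reduce to $h_I$ and cite the Iwaniec--Martin theorem.
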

\begin{proof}
We argue exactly as in the proof of the fundamental inequality for Jacobians in \cite[p. 352]{Iw:Ma:01}. The key observation is that $\star h^{\star} dx^{I}$ is the Jacobian of $h_I = \pi_I \circ h$ with coordinate projection $\pi_I(x) = ( x^{i_1}, \dots, x^{i_n} )$.  Notice that at least one of the components of $h_I$ vanishes at the boundary of $\Omega$, so \cite[Theorem 13.6.1]{Iw:Ma:01} is applicable for $h_I$ and its components exactly as in \cite[p. 352]{Iw:Ma:01}. Indeed, $h_I$ plays the role of $f$ in the argument. In inequality (13.68) in \cite[p. 352]{Iw:Ma:01}, we estimate $\|Dh_I\|$ from above by $\|Dh\|$. More explicitly, we have
\begin{align*}
    \left|
        \int_\Omega
            \frac{ h^{\star} dx^{I} }{ | dh^{i_1} |^{ \frac{n-p}{n} } \dots | dh^{ i_n } |^{ \frac{n-p}{n} } }
    \right|
    \leq
    C(n)\left| \frac{n-p}{n} \right|
    \int_\Omega \|Dh\|^p \,d\mathcal{H}^n.
\end{align*}
Then \eqref{eq:integrationbyparts:1} readily follows by triangle inequality. Similarly, we need only replace the operator norm $\| Dh_I \|$ by the larger quantity $\| Dh \|$ in order to derive the inequality
\begin{align*}
    \left|
        \frac{ | dh^{i_1} | \dots | dh^{ i_n } | }{ \|Dh\|^n }
        -
        \left( \frac{ | dh^{i_1} | \dots | dh^{ i_n } | }{ \|Dh\|^n } \right)^{ \frac{p}{n} }
    \right|
    \leq
    \left| \frac{n-p}{n} \right|
\end{align*}
which then implies
\begin{align*}
    \left|
        \frac{ h^{\star} dx^{I} }{ \|Dh\|^{ n-p } }
        -
        \frac{ h^{\star} dx^{I} }{ | dh^{i_1} | \dots | dh^{ i_n } |^{ \frac{n-p}{n} } }
    \right|
    &\leq
    \left|
        \frac{ | dh^{i_1} | \dots | dh^{ i_n } | }{ \|Dh\|^{n-p} }
        -
        \left( | dh^{i_1} | \dots | dh^{ i_n } | \right)^{ \frac{p}{n} }
    \right|
    \\
    &=
    \left|
        \frac{ | dh^{i_1} | \dots | dh^{ i_n } | }{ \|Dh\|^{n} }
        -
        \left( \frac{ | dh^{i_1} | \dots | dh^{ i_n } | }{ \|Dh\|^{n} } \right)^{ \frac{p}{n} }
    \right|
    \|Dh\|^{ p }
    \\
    &\leq
    \left| \frac{ n-p}{n} \right|
    \|Dh\|^{ p }.
\end{align*}
similarly to \cite[p. 352]{Iw:Ma:01}. Now triangle inequality gives
\begin{align*}
    \left|
        \sum_{I}
        \frac{ \omega_I h^{\star} dx^{I} }{ \|Dh\|^{ n-p  } }
        -
        \frac{ \omega_I h^{\star} dx^{I} }{ | dh^{i_1} |^{ \frac{n-p}{n} } \dots | dh^{ i_n } |^{ \frac{n-p}{n} } }
    \right|
    \leq
    \| \omega \|_{ \ell^1 }
    \left| \frac{n-p}{n} \right|
    \| Dh \|^{ p }.
\end{align*}
So integrating and applying the triangle inequality finishes the proof.
\end{proof}

We now prove a fundamental inequality for calibrations similar to the fundamental inequality of volume forms (Jacobians), cf. \cite[Theorem 13.7.1]{Iw:Ma:01}.
\begin{corollary}[Fundamental inequality for calibrations]\label{cor:fundamentalinequality}
Suppose that $p \in (1,\infty)$, $h \in W^{1,p}( \Omega, \mathbb{R}^m )$ and at least $(m-n)+1$ components of $h$ vanish at the boundary of $\Omega$ in the Sobolev sense. Then
\begin{align*}
    \left|
        \int_\Omega \|Dh\|^{p-n} h^{\star}\omega
    \right|
    \leq
    \lambda(n)
    \|\omega\|_{ \ell^1 }
    \left| 1 - \frac{n}{p} \right|
    \int_\Omega \|Dh\|^p \,d\mathcal{H}^n
\end{align*}
for every calibration $\omega \in \lambda^{n}\mathbb{R}^m$.
\end{corollary}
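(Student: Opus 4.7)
The plan is to combine the two estimates of \Cref{lemm:integrationbyparts} via the triangle inequality, using the regularized integral
\[
R_p \coloneqq \int_\Omega \sum_I \omega_I \frac{h^{\star} dx^{I}}{|dh^{i_1}|^{(n-p)/n} \cdots |dh^{i_n}|^{(n-p)/n}}
\]
as an intermediate quantity. The first estimate of \Cref{lemm:integrationbyparts} bounds $|R_p|$ directly, while the second bounds the difference $\left| \int_\Omega \|Dh\|^{p-n} h^{\star}\omega - R_p \right|$; each is controlled by a constant multiple of $\|\omega\|_{\ell^1} |(n-p)/n| \int_\Omega \|Dh\|^p\,\df\HH^n$. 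Adding them via the triangle inequality yields
\[
\left| \int_\Omega \|Dh\|^{p-n} h^{\star}\omega \right| \leq (C(n) + 1) \|\omega\|_{\ell^1} \frac{|n-p|}{n} \int_\Omega \|Dh\|^p\,\df\HH^n,
\]
and the identity $|n-p|/n = (p/n) |1 - n/p|$ rearranges this into the claimed form, provided the factor $p/n$ is absorbed, together with $C(n)+1$, into the Iwaniec--Martin constant $\lambda(n)$ coming from \cite[Theorem 7.8.2]{Iw:Ma:01}.

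The applicability of \Cref{lemm:integrationbyparts} hinges on the boundary hypothesis transferring correctly to every coordinate projection $h_I = \pi_I \circ h$. Since at least $(m-n)+1$ of the $m$ components of $h$ vanish on $\partial \Omega$ in the Sobolev sense, a pigeonhole argument ensures that for each ordered $n$-multi-index $I = (i_1, \ldots, i_n) \subset \{1, \ldots, m\}$ at least one of the components $h^{i_1}, \ldots, h^{i_n}$ of $h_I$ vanishes on $\partial \Omega$. This is precisely the condition needed in order to invoke the Jacobian integration-by-parts identity of \cite[Theorem 13.6.1]{Iw:Ma:01}, which underlies \Cref{lemm:integrationbyparts}.

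The main subtlety is to produce a single constant $\lambda(n)$ depending only on the dimension, independent of $p$, $m$, and $\omega$. This is inherited directly from the sharp Hodge-type $L^p$-estimate \cite[Theorem 7.8.2]{Iw:Ma:01}, which is valid on the full range $p \in (1, \infty)$ and is the genuine source of the constant $C(n)$ appearing in \Cref{lemm:integrationbyparts}; the $\ell^1$-norm of $\omega$ enters purely through the multi-index summation in the definition of $R_p$. The degenerate case $p = n$ is immediate, as the right-hand side vanishes while the left-hand side reduces to $\left| \int_\Omega h^{\star}\omega \right|$, which is zero by Stokes' theorem applied to $h^{\star}\omega = d(h^{\star}\alpha)$ for a constant-coefficient primitive $\alpha \in \Lambda^{n-1}\R^m$ of $\omega$, since the boundary contribution of $h^{\star}\alpha$ vanishes under the component hypothesis on $h$.
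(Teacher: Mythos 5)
Your main step---combining the two estimates of \Cref{lemm:integrationbyparts} by the triangle inequality through the regularized integral $R_p$ and then rewriting $|n-p|/n = (p/n)\,|1-n/p|$---is the paper's argument, and your pigeonhole observation that every projection $h_I$ inherits a vanishing component is also exactly what the paper uses. However, there is a genuine gap in the range of exponents: \Cref{lemm:integrationbyparts} is stated (and proved) only under the hypothesis $n+1 \le 2p \le 3n$, whereas the corollary claims the inequality for all $p \in (1,\infty)$. You invoke the lemma on the whole range, and your suggestion that the factor $p/n$ can be ``absorbed, together with $C(n)+1$, into $\lambda(n)$'' cannot work there: $p/n$ is unbounded as $p \to \infty$, so no constant depending only on $n$ absorbs it. Appealing to \cite[Theorem 7.8.2]{Iw:Ma:01} being valid for all $p\in(1,\infty)$ does not repair this, because the restriction sits in the lemma you are actually quoting, not merely in the underlying Hodge-type estimate.

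The paper closes this gap with a case split. On $n+1 \le 2p \le 3n$ one has $p/n \le 3/2$, hence $|(n-p)/n| \le (p/n)|(p-n)/p| \le 3\,|(p-n)/p|$, and the lemma yields the claim with $\lambda(n) = 3(C(n)+1)$. Outside that range, $\lambda(n)\,|1-n/p| > 1$, and since $\omega$ is a calibration (comass one, so $\|\omega\|_{\ell^1}\ge 1$) Hadamard's inequality gives the trivial pointwise bound $\bigl| \|Dh\|^{p-n}\star h^{\star}\omega \bigr| \le \|Dh\|^{p}$, which already implies the stated inequality. Your proposal contains no argument for $2p>3n$ or $p<(n+1)/2$, so as written it proves the corollary only in the middle range. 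A minor further point: a constant-coefficient $(n-1)$-form is closed, so it cannot be a primitive of $\omega$; the primitive you need has linear coefficients, and in any case $p=n$ lies inside the lemma's admissible range and requires no separate treatment.
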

\begin{proof}
We set $\lambda(n) = 3( C(n) + 1 )$ for the constant $C(n)$ in \Cref{lemm:integrationbyparts}. Then the claim is immediate from \Cref{lemm:integrationbyparts} in the range $n+1 \leq 2 p \leq 3n$ after we observe that $|(n-p)/n| \leq (p/n)|(p-n)/p| \leq 3|(p-n)/p|$.

If $2p > 3n$ or $1 \leq p < (n+1)/2$, then $\lambda(n)\left|1-n/p\right| > 1$ so a better inequality is obtained using the trivial one: $\|Dh\|^{p-n} \star h^{\star}\omega \leq \|Dh\|^p$. Either way, the claim holds.
\end{proof}

\begin{remark}
When $\|\omega\|_{ \ell^1 } = 1$, we obtain the same constant as in \cite[Theorem 13.7.1]{Iw:Ma:01}. It is not clear to us if the factor $\|\omega\|_{ \ell^1 }$ is truly necessary in \Cref{cor:fundamentalinequality}. Whether or not $\|\omega\|_{\ell^1}$ is needed in \Cref{cor:fundamentalinequality} is directly related to the size of critical interval in \Cref{prop:caccioppoli:estimate}.
\end{remark}

\begin{proof}[Proof of \Cref{prop:caccioppoli:estimate}]
We first derive the claim in the case $c = 0$ since the general case follows from the argument below simply by considering $h-c$ in place of $h$.

Here $h$ satisfies the assumptions of \Cref{prop:inhomogeneous:distortion:modified}; consider next a nonnegative $\phi \in \mathcal{C}^{1}_{c}( \Omega )$. Then $H = \phi h$ satisfies an inhomogeneous distortion inequality with the inhomogeneous term satisfying the pointwise inequality \eqref{eq:inhom:2}. Integrating the inhomogeneous distortion inequality gives
\begin{equation*}
    \int_\Omega \|DH\|^p \,d\mathcal{H}^n
    \leq
    K
    \int_\Omega \|DH\|^{p-n} H^{\star}\omega
    +
    \int_\Omega |G|
    \,d\mathcal{H}^n.
\end{equation*}
\Cref{lemm:integrationbyparts} implies that
\begin{equation*}
    \int_\Omega \|DH\|^{p-n} H^{\star}\omega
    \leq
    \lambda(n) \|\omega\|_{ \ell^1 }
    \left| 1 - \frac{n}{p} \right|
    \int_\Omega \|DH\|^{p}
    \,d\mathcal{H}^n.
\end{equation*}
By combining the inequalities and using the pointwise upper bound \eqref{eq:inhom:2} for $G$ in \Cref{prop:inhomogeneous:distortion:modified}, together with a rearrangement of the terms, we obtain that
\begin{align*}
    \int_\Omega \|DH\|^p \,d\mathcal{H}^n
    &\leq
    \frac{ KC }{ 1 - K \lambda(n) \|\omega\|_{ \ell^1 }
    \left| 1 - \frac{n}{p} \right| }
    \int_\Omega 
    ( \| DH \| + \|h \otimes D\phi\| )^{p-1}
    \| h \otimes D\phi \|
    \,d\mathcal{H}^n.
\end{align*}
We add the integral of $\|h \otimes D\phi\|^p$ to both sides and use the convexity of $t \mapsto t^p$ and $\| h \otimes D\phi \|^p \leq ( \|Dh\| + \|h \otimes D\phi\| )^{p-1} \|h \otimes D\phi\|$ to get
\begin{align*}
    &\int_\Omega \left( \| D H  \| + \| h \otimes D\phi \| \right)^p \,d\mathcal{H}^n
    \\
    &\leq
    C(p,n,K,\|\omega\|_1)
    \int_\Omega 
    ( \| DH \| + \|h \otimes D\phi\| )^{p-1}
    \| h \otimes D\phi \|
    \,d\mathcal{H}^n.
\end{align*}
Then Hölder's inequality yields that
\begin{align*}
    \| \| D H  \| + \| h \otimes D\phi \| \|_{ L^{p}( \Omega, \mathbb{R}^{ m \times n } ) }^p
    \leq
    C
    \| h \otimes D\phi \|_{ L^{p}(\Omega, \mathbb{R}^{ m \times n }) }^p.
\end{align*}
Recalling that $\phi Dh = DH - h \otimes D\phi$, triangle inequality gives that
\begin{align*}
    \| \phi Dh \|_{ L^{p}( \Omega, \mathbb{R}^{ m \times n } ) }^p
    &\leq
    \| \|DH\| + \| DH - h \otimes D\phi \| \|_{ L^{p}( \Omega, \mathbb{R}^{ m \times n } ) }^p
    \\
    &\leq
    C
    \| h \otimes D\phi \|_{ L^{p}(\Omega, \mathbb{R}^{ m \times n }) }^p
\end{align*}
as claimed.
\end{proof}

\section{Removable sets}\label{sec:removable}
We consider a notion of removability for functions and mappings.
\begin{definition}\label{def:removability}
Let $M$ be a Riemannian $n$-manifold, $E \subset M$ a closed set and $p \in [1,\infty)$. We say that
\begin{enumerate}
    \item $E$ is \emph{removable for $W^{1,p}$-functions} if $\mathcal{H}^{n}( E ) = 0$ and every $u \in W^{1,p}( M \setminus E)$ is the restriction of some $h \in W^{1,p}( M )$.
    \item $E$ is \emph{removable for $W^{1,p}$-mappings} if $\mathcal{H}^{n}( E ) = 0$ and for every complete metric space $X$, every $u \in W^{1,n}( M \setminus E, X)$ is the restriction of some $h \in W^{1,n}( M, X )$.
\end{enumerate}
\end{definition}
Here $\mathcal{H}^n$ is the $n$-dimensional Hausdorff measure. The measure is normalized so that $\mathcal{H}^n$ coincides with the Riemannian volume measure on $M$.

The metric-valued Sobolev theory we use is the upper gradient approach due to Heinonen, Koskela, Shanmugalingam and Tyson, cf. \cite{Hei:Kos:Sha:Ty:01} and \cite[p. 181]{Hei:Kos:Sha:Ty:15}. See \cite[Section 10]{Hei:Kos:Sha:Ty:15} for an explanation on how the upper gradient approach relates to other definitions. We emphasize that when the target is a Euclidean space, the definition coincides with the standard approach, cf. \cite[Theorem 6.1.17]{Hei:Kos:Sha:Ty:15}. The removability notions in \Cref{def:removability} are equivalent; the clearest reference implying this is \cite{GB:Iko:Zhu:22} but we include the details for the convenience of the reader.
\begin{proposition}\label{prop:removability:mappings}
Consider $p \in [1,\infty)$, a Riemannian $n$-manifold $M$, and a closed set $E \subset M$. Then $E$ is removable for $W^{1,p}$-functions if and only if $E$ is removable for $W^{1,p}$-mappings.
\end{proposition}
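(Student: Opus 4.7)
The plan is to prove both implications, with the converse being immediate and the main work lying in the forward direction. Taking the target $X = \mathbb{R}$, which is complete and for which the upper-gradient space $W^{1,p}(M, \mathbb{R})$ coincides with the classical $W^{1,p}(M)$, shows at once that removability for $W^{1,p}$-mappings implies removability for $W^{1,p}$-functions; the condition $\mathcal{H}^n(E) = 0$ is built into both definitions.

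For the converse, fix a complete metric space $X$ and a map $u \in W^{1,p}(M \setminus E, X)$ with an $L^p$ upper gradient $g$. Because upper-gradient Sobolev maps are essentially separably valued, one may replace $X$ by the closure of the image and assume $X$ is separable; fix a countable dense sequence $\{y_k\}_{k \in \mathbb{N}} \subset X$. The scalar function $v_k(x) := d(u(x), y_k) - d(y_1, y_k)$ is a $1$-Lipschitz post-composition of $u$, hence $v_k \in W^{1,p}(M \setminus E)$ with the same $g$ as an upper gradient. Applying the hypothesis to every $v_k$ produces an extension $\tilde v_k \in W^{1,p}(M)$; the extension is unique since $\mathcal{H}^n(E) = 0$. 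Define $\tilde u \colon M \to X$ by $\tilde u = u$ on $M \setminus E$ and $\tilde u \equiv y_1$ on $E$, so that $d(\tilde u(\cdot), y_k) - d(y_1, y_k) = \tilde v_k$ almost everywhere on $M$ for every $k$.

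The main obstacle is to promote these scalar extensions to a single metric-valued Sobolev map $\tilde u \in W^{1,p}(M, X)$. By the standard characterization of metric-valued Sobolev maps via distance functions to a countable dense subset (see Section 10 of \cite{Hei:Kos:Sha:Ty:15}), this reduces to exhibiting one $L^p$ function on $M$ that is simultaneously an upper gradient of every $\tilde v_k$. The natural candidate is $\tilde g$, the extension of $g$ by zero across $E$. It serves as an upper gradient on $M$ precisely when $p$-almost every rectifiable curve in $M$ meets $E$ in an $\mathcal{H}^1$-null set, in which case the upper-gradient inequality for each $v_k$ on $M \setminus E$ transfers verbatim along such curves to the corresponding $\tilde v_k$. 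This $p$-exceptionality of $E$ is the Vodop'yanov--Gol'dshtein-type curve-modulus reformulation of removability for $W^{1,p}$-functions, recorded in the precise form we need in \cite{GB:Iko:Zhu:22}; invoking it closes the plan.
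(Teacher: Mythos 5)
Your overall route differs from the paper's (which embeds $X$ isometrically into $L^{\infty}(X)$, uses the metric approximation property to project onto finite-rank images, extends those componentwise, and then passes to a limit using lower semicontinuity of energy), and the scheme of extending the distance functions $v_k$ and then producing one common upper gradient is in principle viable. However, the step you use to close the argument rests on a false equivalence. The condition that $p$-almost every rectifiable curve in $M$ meets $E$ in an $\mathcal{H}^1$-null set is \emph{not} a reformulation of removability for $W^{1,p}$-functions: it holds automatically for \emph{every} closed set with $\mathcal{H}^n(E)=0$, because $\chi_E=0$ almost everywhere and hence $\int_\gamma \chi_E\,ds=0$ for $p$-a.e.\ curve (Fuglede's lemma). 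For instance, a compact piece of a hyperplane in $\mathbb{R}^n$ satisfies your curve condition yet is not removable, since $W^{1,p}$-functions on the complement may jump across the slit. For the same reason the asserted ``verbatim transfer'' of the upper-gradient inequality fails: knowing that a curve spends zero length in $E$ does not prevent $v_k$ from jumping across $E$ along that curve, so the zero extension $\tilde g$ need not satisfy the endpoint inequality for curves that cross $E$; the inequality on the open subcurves in $M\setminus E$ only controls the variation away from $E$. So the ``precisely when'' claim, and the appeal to \cite{GB:Iko:Zhu:22} for it, cannot be correct as stated.

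The needed conclusion is nevertheless true, but for a different reason, and it uses removability once more rather than any curve condition on $E$: since each $\tilde v_k\in W^{1,p}(M)=N^{1,p}(M)$ and $M$ is a Riemannian manifold, the minimal $p$-weak upper gradient of $\tilde v_k$ coincides almost everywhere with $|\nabla \tilde v_k|$ (cf.\ \cite[Theorem 6.1.17]{Hei:Kos:Sha:Ty:15}), which equals $|\nabla v_k|\le g$ a.e.\ on $M\setminus E$ and hence a.e.\ on $M$ because $\mathcal{H}^n(E)=0$; therefore $\tilde g$ is simultaneously a $p$-weak upper gradient of every $\tilde v_k$. With that correction, plus the routine hypotheses of the distance-function characterization of $N^{1,p}(M,X)$ (measurability and essential separable valuedness of $\tilde u$, and integrability of $d(\tilde u(\cdot),y_1)$), your argument yields the forward implication and would constitute a genuinely different, arguably more elementary proof than the paper's finite-rank approximation scheme; as written, though, the justification of the key step is incorrect.
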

\begin{proof}
We only prove the 'only if'-direction as the 'if'-direction is immediate from the definitions. Consider a complete metric space $X$ and $u \in W^{1,n}( M \setminus E, X )$. By definition, there is $v \in W^{1,n}(  M \setminus, L^{\infty}(X) )$ and an isometric embedding
\begin{equation*}
    \iota \colon X \xhookrightarrow{} L^{\infty}(X)
\end{equation*}
for which $v = \iota \circ u$ and $\| u \|_{ W^{1,p}( M \setminus E, X ) } = \| v \|_{ W^{1,p}( M \setminus E, L^\infty(X) ) }$; $L^{\infty}(X)$ is given the supremum norm.

By the inner regularity of $\mathcal{H}^n$ on $M \setminus E$ and Lusin--Egoroff theorem, there is an increasing sequence of compact sets $( K_i )_{ i = 1 }^{ \infty }$ in $M \setminus E$ for which
\begin{equation*}
    v|_{ K_i } \text{ is continuous}
    \quad\text{and}\quad
    \int_{ ( M \setminus E ) \setminus K_i } | v |^p + \| Dv \|^p \,d\mathcal{H}^n \leq 2^{-(i+3)p};
\end{equation*}
here $\| Dv \|$ is the minimal $p$-weak upper gradient of $v$.

Since $L^{\infty}(X)$ has the metric approximation property, see e.g. \cite[p. 258, (9)]{Ko:79}, and $F_i = v( K_i )$ is compact, there is a finite rank linear map $T_i \colon L^{\infty}( X ) \rightarrow L^{\infty}( X )$ such that
\begin{equation*}
    \| T_i \| \leq 1
    \quad\text{and}\quad
    \sup_{ w \in F_i } \| w - T_i(w) \| \leq \frac{ 2^{-(i+3)} }{ \left(1 + \mathcal{H}^n( K_i ) \right)^{1/p} }.
\end{equation*}
Then $v_i \coloneqq T_i( v ) \in W^{1,p}( M \setminus E, L^{\infty}(X) )$ and
\begin{equation}\label{eq:lowersemicontinuity:ofenergy}
    | v - v_i |(x)
    \leq
    \left( \frac{ 2^{ -(i+3) } }{ \left(1 + \mathcal{H}^n( K_i ) \right)^{1/p} }
    \chi_{ K_i }
    +
    2 | v |
    \chi_{ ( M \setminus E ) \setminus K_i } \right)(x),
    \quad\text{for $x \in M \setminus E$.}
\end{equation}
Here $\chi_{A}$ is the characteristic function of the set $A$.

Since the range of $T_i$ has dimension $N_i \in \mathbb{N}$, there exists $h_i \in W^{1,p}( M, L^{\infty}(X) )$ extending $v_i$. Indeed, there is a bounded linear map $L_i \colon L^{\infty}( X ) \rightarrow \mathbb{R}^{ N_i }$ so that the restriction of $L_i$ to the range of $T_i$ is bi-Lipschitz. Then $L_i \circ v_i$ can be extended componentwise and the existence of an extension $h_i \in W^{1,n}( M, L^{\infty}(X) )$ of $v_i$ follows from $L_i$ being bi-Lipschitz in the range of $T_i$. Since $\mathcal{H}^n(E) = 0$ and $T_i$ has operator norm one, we deduce that
\begin{align*}
    &\| h_i \|_{ W^{1,p}( M, L^{\infty}(X) ) }
    =
    \| v_i \|_{ W^{1,p}( M \setminus E, L^{\infty}(X) ) }
    \leq
    \| v \|_{ W^{1,p}( M \setminus E, L^{\infty}(X) ) }
    \quad\text{and}    
    \\
    &\| h_{i+1} - h_i \|_{ L^{p}( M, L^{\infty}(X) ) }
    =
    \| v_{i+1} - v_{i} \|_{ L^{p}( M \setminus E, L^{\infty}(X) ) }
    \leq
    2^{-i}
    \quad\text{for every $i \in \mathbb{N}$,}
\end{align*}
where the $L^{p}$-norm estimate for $v_{i+1}-v_i$ follows from \eqref{eq:lowersemicontinuity:ofenergy}.

We conclude that $( h_i )_{ i = 1 }^{ \infty }$ is bounded in $W^{1,p}( M, L^{\infty}(X) )$ and Cauchy in the space $L^{p}( M, L^{\infty}(X) )$; note that in case $p = 1$, $\|Dh_i\|$ is bounded from above by the zero extension of $\|Dv\|$, so $( \|Dh_i\| )_{ i = 1 }^{ \infty }$ has a weakly convergent subsequence by Dunford--Pettis theorem if $p = 1$ and by reflexivity if $p > 1$. We conclude that the Cauchy limit of $( h_i )_{ i = 1 }^{ \infty}$ is an extension of $v$ and, by lower semicontinuity of energy, a representative of the extension of $v$ has a $p$-integrable weak upper gradient, cf. \cite[Theorem 7.3.9]{Hei:Kos:Sha:Ty:15}. Thus there is an extension $h \in W^{1,p}( M, L^{\infty}(X) )$ of $v$ as claimed.
\end{proof}

\section{Continuity of quasiregular curves and isoperimetric inequalities}\label{sec:iso}
We refer the reader to \cite[Section 6]{Iko:23} for the following terminology about isoperimetric inequalities of Euclidean type. We recall a special case of \cite[Theorem 6.8]{Iko:23}.
\begin{theorem}\label{thm:isoperimetric:to:continuity}
Suppose that $N$ is a complete Riemannian manifold that supports an isoperimetric inequality of Euclidean type with dimension $n$, mass bound $M'$, and constant $A'$, and with dimension $(n-1)$, mass bound $M$, and constant $A$, respectively, for some $n \geq 2$.

Suppose that $\omega \in \Omega^{n}( N )$ is strongly nonvanishing, closed, and bounded, and $F \in W^{1,n}_{loc}( \mathbb{B}^{n}; N )$ satisfies $\|DF\|^n \leq K \star F^{*}\omega$ almost everywhere. Then $F$ has a continuous representative that is a $K$-quasiregular $\omega$-curve. In fact, the continuous representative is locally Hölder continuous with exponent $\alpha = \alpha( K, \mathrm{inf}\|\omega\|, \sup\|\omega\|, A, n) \in (0,1]$.
\end{theorem}
In the statement $\mathbb{B}^{n}$ refers to the closed Euclidean ball.

Our interest in \Cref{thm:isoperimetric:to:continuity} comes from \Cref{prop:sectional:to:contractibility}, stated below, connecting sectional curvature upper bounds and injectivity radius lower bounds to Lipschitz contractiblity in the sense of \cite{Wen:07}. To this end, we say that a bounded set $B \subset N$ is \emph{$(\beta,\gamma)$-Lipschitz contractible} if there exists $x_0 \in X$ and a Lipschitz map $\varphi \colon [0,1] \times B \to X$ satisfying $\varphi(1,x) = x$, $\varphi( 0, x ) = x_0 \in X$ for every $x \in X$ with
\begin{align*}
    d( \varphi(t,x), \varphi(t',x') )
    \leq
    \beta|t-t'| + \gamma d(x,x')
    \quad\text{for $t,t' \in [0,1]$ and $x,x' \in B$.}
\end{align*}
We say that $B$ is \emph{$\gamma$-contractible} if we may take $\beta = \gamma \diam(B)$. It follows from \cite[Proposition 3.4]{Wen:07} that if there are $\delta > 0$ and $\gamma > 0$ such that every set of diameter at most $\delta$ is $\gamma$-contractible, then $N$ admits \emph{Euclidean coning inequalities} up to scale $\delta$ of all dimensions and thus, by \cite[Theorem 5.6]{Wen:07}, $N$ admits \emph{Euclidean isoperimetric inequalities} for all dimensions up to some mass. Lastly, we recall that the \emph{injectivity radius} $r_{\mathrm{inj}}(x)$ at $x \in N$ is the supremum over the radii $r$ at which the exponential map $\mathrm{exp}_x \colon T_xN \cap B(0,r) \to N$ is a diffeomorphism onto its image $B( x, r )$. We say that $N$ has an \emph{injectivity radius lower bound $r_0$ > 0} if $\inf_{x\in N} r_{\mathrm{inj}}(x) \geq r_0$. We recall that a Riemannian manifold with a lower bound on the injectivity radius is complete. We finish this section with the following proposition.
\begin{proposition}\label{prop:sectional:to:contractibility}
Let $N$ be a Riemannian manifold with a sectional curvature upper bound $\kappa \in \mathbb{R}$ and an injectivity radius lower bound $r_0>0$. Consider
\begin{equation*}
    \delta_0
    =
    \left\{
    \begin{split}
        &\min\left\{ r_0, \pi/ ( 2 \sqrt{\kappa} ) \right\}, \quad&&\text{if $\kappa > 0$}
        \\
        &r_0, \quad&&\text{if $\kappa \leq 0$.}
    \end{split}
    \right.
\end{equation*}
If $\delta \in (0,\delta_0/2)$, then every set of diameter at most $\delta$ is $2$-Lipschitz contractible. In particular, $N$ admits Euclidean isoperimetric inequalities for all dimensions up to some mass.
\end{proposition}
\begin{proof}
The argument is fairly standard but we provide an outline of the argument for the convenience of the reader. It follows from \cite[Theorem 1]{Gulliver:75} that balls of radius $r \in (0,\delta_0)$ are convex and if we consider $\delta$ as in the claim, then balls of radius $\delta$ in $N$ are $\mathrm{CAT}( \kappa )$-spaces; for the definition of $\mathrm{CAT}( \kappa )$-spaces, see e.g. \cite[p. 158, Definition 1.1]{Brid:Haef:99}. Indeed, the balls of radius $2\delta$ satisfy the assumptions of \cite[p. 169, Lemma 1A.1]{Brid:Haef:99} by (the proof of) \cite[Proposition 1A.5]{Brid:Haef:99}. The $\mathrm{CAT}( \kappa )$-conclusion for balls of radius $\delta$ then follows from \cite[p. 170, Proposition 1A.2]{Brid:Haef:99}.

Now in case $x_0 \in N$ and $r \in (0,\delta]$, we consider $\varphi \colon [0,1] \times B( x_0, r ) \to B( x_0,r)$ where $\varphi( t, x ) = \gamma_{x_0,x}(t)$ for the constant speed geodesic $\gamma_{x_0,x} \colon [0,1] \to B( x_0, r )$ joining $x_0$ to $x$. Consider the distance $D = d( \varphi(t,x), \varphi(t',x') )$ for $t,t' \in [0,1]$ and $x, x' \in B( x_0, r )$. The claim readily follows if we prove that
\begin{align}\label{eq:desiredupperbound}
    D \leq 2(  r|t'-t| + d( x, x' ) ).
\end{align}
We consider a geodesic triangle $T$ with sides defined by the constant speed geodesics $\gamma_{x_0,x}, \gamma_{x_0,x}, \gamma_{x,x'} \colon [0,1] \to B( x_0, r )$. Given the upper bound on the radius $r$, we find constant speed geodesics $\widehat{\gamma}_{y_0,y}, \widehat{\gamma}_{y_0,y'}, \widehat{\gamma}_{y,y'} \colon [0,1] \to B( y_0, r )$ on the model space $M_{\kappa}$ of constant sectional curvature $\kappa$ such that $( \gamma_{x_0,x}, \widehat{\gamma}_{y_0,y} )$ have equal length, and similarly, the pairs $( \gamma_{x_0,x'}, \widehat{\gamma}_{y_0,y'} )$ and $( \gamma_{x,x'}, \gamma_{y,y'} )$ have equal length, respectively. The definition of $\mathrm{CAT}( \kappa )$-spaces guarantee that if $T'$ is the geodesic triangle formed by these geodesics and $p \colon T' \to T$ is the map sending $\gamma_{y_0,y}(s)$ to $\gamma_{x_0,x}(s)$ for each $s \in [0,1]$, and similarly for the other two geodesics, then $p$ is $1$-Lipschitz. Therefore, in particular,
\begin{align*}
    D
    \leq
    d( \widehat{\gamma}_{y_0,y}(t), \widehat{\gamma}_{y_0,y'}(t')).
\end{align*}
By this inequality and as the side lengths of $T'$ and $T$ coincide, \eqref{eq:desiredupperbound} holds for $N$ if we are able to prove the corresponding claim in the special case that $N$ has constant sectioncal curvature $\kappa$. We assume this from this point onwards. In case $\kappa > 0$, we use normal coordinates centered at $x_0$ in which the Riemannian tensor is of the form $g_\kappa = dr^2 + \frac{1}{\kappa} \sin^2( \sqrt{\kappa} r ) g_{ \mathbb{S} }$ on the Euclidean sphere of radius $r$ where $g_{\mathbb{S}}$ is the Riemannian metric on the Euclidean sphere of radius one, see e.g. \cite[Theorem 10.14]{Lee:18}. Since radial geodesics in normal coordinates are of the form $s \mapsto s v$, we have that $\varphi(t,x) = tx$ and $\varphi(t',x') = t'x'$ in these coordinates. To obtain the desired upper bound for $D$, it suffices to bound from above the $g_\kappa$-length of the Euclidean segments joining $tx$ to $t'x$ and $t'x$ to $t'x'$. The length of the former segment corresponds to the Euclidean length and is thus bounded from above by $|t-t'| r$. The length of the latter segment is also bounded from above by the Euclidean length because $\sin( \sqrt{\kappa}r ) / \sqrt{\kappa} \leq r$ for $r \in [0, \pi / (2\sqrt{\kappa})]$. Since $\frac{r}{2} \leq \sin( \sqrt{\kappa}r ) / \sqrt{\kappa}$ for $r \in [0, \pi / (2\sqrt{\kappa})]$, the Euclidean length from $x$ to $x'$ is bounded from above by $2d(x',x)$. Now \eqref{eq:desiredupperbound} follows. On the other hand, in case $\kappa \leq 0$, we may replace $M_{\kappa}$ by $M_0$ in the construction of $T'$ above and thus it is enough to consider the case $\kappa = 0$. Then \eqref{eq:desiredupperbound} holds with the constant one since $T'$ is isometric to a triangle in $\mathbb{R}^2$ and there the inequality is clear.
\end{proof}

\section{Proofs of the main theorems}\label{sec:mainresults}

\begin{proof}[Proof of \Cref{thm:integrationbyparts}]  
Consider a calibration $\omega \in \Lambda^{n}\mathbb{R}^m$. We consider the critical interval $( q_\omega, p_\omega )$ from \Cref{prop:caccioppoli:estimate}; we recall the notation $q_\omega = q( K, n, \|\omega\|_{ \ell^1 } )$ and $p_\omega = p( K, n, \|\omega\|_{ \ell^1 } )$.

Let $p \in ( q_\omega, p_\omega )$ and suppose that $f \in W^{1,p}_{loc}( \Omega, \mathbb{R}^m )$ satisfies
\begin{equation*}
    \| D f \|^{n} \leq K \star f^{\star}\omega
    \quad\text{almost everywhere}.
\end{equation*}
We apply \Cref{prop:caccioppoli:estimate} for $h = f$ and obtain
\begin{align*}
    \| \phi Df \|_{ L^{p}( \Omega, \mathbb{R}^{ m \times n } ) }
    &\leq
    C
    \| (f-c) \otimes D\phi \|_{ L^{p}( \Omega, \mathbb{R}^{ m \times n } ) }
\end{align*}
for every $c \in \mathbb{R}$ and nonnegative $\phi \in \mathcal{C}^{\infty}_c( \Omega )$. 

Consider a closed cube $Q$ such that $2Q \subset \Omega$, i.e. a closed cube such that the closed cube with twice the side length and same center is contained in $\Omega$.

If either $n = 2$ and $p > 2$ or $n \geq 3$, we denote $\widehat{p} = p$. If $n = 2$ and $p \leq 2$, we denote $\widehat{p} = 2p$. In the last case, the exponent $q = 2p/(1+p) < p < 2$ satisfies $2q/(2-q) = \widehat{p}$ and in the other cases there is $1 < q < \min\left\{n,p\right\}$ such that $nq/(n-q) = \widehat{p} = p$.

We consider $\phi \in \mathcal{C}^{\infty}_c( \Omega )$ with $0 \leq \phi \leq 1$, $\phi|_{Q} \equiv 1$ and $\phi|_{ \Omega \setminus 2Q } \equiv 0$, satisfying $\|D\phi\| \leq 3/\diam( Q )$. 

Let $c$ be the integral average of $f$ over $2Q$. Then, by the choice of $\phi$ and the Caccioppoli inequality, we get that
\begin{align*}
    \| Df \|_{ L^{p}( Q, \mathbb{R}^{ m \times n } ) }
    &\leq
    \| \phi Df \|_{ L^{p}( \Omega, \mathbb{R}^{ m \times n } ) }
    \leq
    C \frac{ 3 }{ \diam (Q) }
    \| f - c \|_{ L^{p}( 2Q, \mathbb{R}^{ m } ) }.
\end{align*}
Taking integral averages over $2Q$ and applying Hölder's inequality gives that
\begin{align*}
    \frac{ 3 }{ \diam(Q) }
    \left( \aint{ 2Q } | f - c |^{ p }  \right)^{ \frac{1}{ p } }
    \leq
    \frac{ 3 }{ \diam(Q) }
    \left( \aint{ 2Q } | f - c |^{ \widehat{p} }  \right)^{ \frac{1}{ \widehat{p} } },
\end{align*}
where $\aint{}$ refers to the integral average. We apply Sobolev--Poincaré inequality for $f$ with the exponent $q$ fixed above. Then the above inequality gives that
\begin{align}\label{eq:sobolevpoincare}
    \left( \aint{ Q } \|Df\|^p \right)^{ \frac{1}{p} }
    \leq
    C(p,n,K,\|\omega\|_1)
    \left( \aint{2Q} \|Df\|^q \right)^{ \frac{1}{q} }
    <
    \infty.
\end{align}
Since $2Q \subset \Omega$ was arbitrary, the Generalized Gehring lemma holds for the function $g = \|Df\|^q$, see e.g. \cite[Corollary 14.3.1]{Iw:Ma:01}, yielding improved integrability for $g$. So $f \in W^{1,s}_{loc}( \Omega, \mathbb{R}^n )$ for some $s > p$. 

The argument above implies that the set of exponents for which $f \in W^{1,s}_{loc}( \Omega, \mathbb{R}^n )$ is relatively open in $(q_\omega, p_\omega)$. The claim follows if we are able to prove that the set of such $s$ is also relatively closed in $( q_\omega, p_\omega )$. However, this follows from a simple continuity argument using the observation that $C(K,n,\|\omega\|_{ \ell^1},p)$ in \eqref{eq:sobolevpoincare} is bounded in every compact subinterval of $( q_\omega, p_\omega )$.
\end{proof}

\begin{proof}[Proof of \Cref{thm:caccioppoli:estimate}]
The claim is immediate from \Cref{prop:caccioppoli:estimate}.
\end{proof}

\begin{proof}[Proof of \Cref{thm:remov:bounded:curve}]
We may choose $s < n$ such that
\begin{equation}\label{eq:dimensionestimate}
    \mathrm{dim}_{\mathcal{H}}( E ) < n-s < \frac{ n }{ 1 + K \lambda(n) \binom{m}{n} }.
\end{equation}
Consider an essentially bounded $f \in W^{1,n}_{loc}( \Omega \setminus E, \mathbb{R}^m )$ satisfying
\begin{equation*}
    \| Df \|^n
    \leq
    K \star f^{\star}\omega
    \quad\text{almost everywhere}
\end{equation*}
for a calibration $\omega \in \Lambda^{n} \mathbb{R}^m$.

We wish to prove that $f \in W^{1,s}_{loc}( \Omega, \mathbb{R}^m )$ after which \Cref{thm:integrationbyparts} implies that $f \in W^{1,p}_{loc}( \Omega, \mathbb{R}^m )$ for every $p \in ( q( K, n, \|\omega\|_{\ell^1} ), p( K, n, \|\omega\|_{\ell^1} ) )$. Therefore $f$ has a continuous representative and as $E$ has negligible Lebesgue measure, the continuous representative is a $K$-quasiregular $\omega$-curve. So it suffices to establish $\psi f \in W^{1,s}( \Omega, \mathbb{R}^m )$ for an arbitrary nonnegative $\psi \in \mathcal{C}^{\infty}_{c}( \Omega )$.

Let $E'$ denote the intersection of $E$ and the support of $\psi$. Observe that the Sobolev $s$-capacity of $E'$ is zero by \eqref{eq:dimensionestimate} and e.g. \cite[Theorem 17.2.1]{Iw:Ma:01}. Therefore there exists a sequence of functions $\eta_i \in \mathcal{C}^{\infty}_c( \mathbb{R}^n )$ such that $\eta_i \equiv 1$ in an open neighbourhood of $E'$, $0 \leq \eta_i \leq 1$, $\lim_{ i \rightarrow \infty } \eta_i = 0$ almost everywhere on $\mathbb{R}^n$ and
\begin{align*}
    \lim_{ i \rightarrow \infty } \| D\eta_i \|_{ L^{s}( \Omega, \mathbb{R}^n ) } = 0. 
\end{align*}
We wish to apply \eqref{eq:cacciopolli:test} with $\phi_i = ( 1 - \eta_i ) \psi$ for each $i$; the sequence $\phi_i f$ is uniformly bounded in $L^{\infty}( \Omega, \mathbb{R}^m )$, compactly-supported in $E'$, and converge to $\psi f$ almost everywhere in $\Omega$. Moreover, \Cref{prop:caccioppoli:estimate} implies that
\begin{align}\label{eq:cacciopolli:test}
    \| \phi_i D f \|_{ L^{s}( \Omega, \mathbb{R}^{ m \times n } ) }^s
    &\leq
    C
    \| f \otimes D\phi_i \|_{ L^{s}( \Omega, \mathbb{R}^{ m \times n } ) }^s
    \leq
    C \| f \|_{ L^{\infty}(\Omega,\mathbb{R}^m) }^{s} \| D\phi_i \|_{ L^{s}(\Omega,\mathbb{R}^n) }^s
\end{align}
for a constant $C = C(s,n,K,\|\omega\|_1)$ for $i \in \mathbb{N}$.

We also have
\begin{align*}
    D( \phi_i f )
    =
    \phi_i Df
    +
    f \otimes D\phi_i
    =
    \phi_i Df
    +
    f \otimes ( (1-\eta_i) D\psi - \psi D\eta_i ) 
\end{align*}
where $f \otimes D\phi_i$ converge to $f \otimes D\psi$ in $L^{s}( \Omega, \mathbb{R}^{m \times n} )$ and $D\phi_i$ to $D\psi$ in $L^{s}( \Omega, \mathbb{R}^{ n } )$. So triangle inequality and the lower semicontinuity of energy imply that
\begin{align*}
    \| D( \psi f ) \|_{ L^{s}( \Omega, \mathbb{R}^{m \times n} ) }
    &\leq
    \liminf_{ i \rightarrow \infty }
    \| D( \phi_i f ) \|_{ L^{s}( \Omega, \mathbb{R}^{m \times n} ) }
    \\
    &\leq
    \liminf_{ i \rightarrow \infty }
    \| \phi_i D f \|_{ L^{s}( \Omega, \mathbb{R}^{ m \times n } ) }
    +
    \| f \otimes D\psi \|_{ L^{s}( \Omega, \mathbb{R}^{m \times n} ) }
    \\
    &\leq
    C^{1/s}
    \liminf_{ i \rightarrow \infty }
    \| f \|_{ L^{\infty}(\Omega, \mathbb{R}^m ) } \| D\phi_i \|_{ L^{s}(\Omega, \mathbb{R}^n) }
    +
    \| f \otimes D\psi \|_{ L^{s}( \Omega, \mathbb{R}^{m \times n} ) }
    \\
    &=
    C^{1/s}
    \| f \|_{ L^{\infty}(\Omega, \mathbb{R}^m) } \| D\psi \|_{ L^{s}(\Omega,\mathbb{R}^n) }
    +
    \| f \otimes D\psi \|_{ L^{s}( \Omega, \mathbb{R}^{m \times n} ) }
    <
    \infty.
\end{align*}
The claim $\psi f \in W^{1,s}( \Omega, \mathbb{R}^m )$ follows.
\end{proof}

\begin{proof}[Proof of \Cref{thm:pushforward}]
Let $\Omega \subset \mathbb{R}^n$ be an open set and for each $x_0 \in \Omega$, let $Q = Q( x_0, r_0 )$ denote the closed cube centered at $x_0$ and of side length $r_0$. Consider $r_0 > 0$ so that $2Q \subset \Omega$. Then \cite[Corollary 1.4 and Lemma 5.1]{Iko:23} imply the following: for almost every $s \in (0, 2r_0)$, we have that
\begin{equation}\label{eq:integralaverage}
    \int_{ Q( x_0, s) } f^{\star}\omega
    \leq
    A_{n-1}
    \left(
        \int_{ \partial Q( x_0, s ) } \|Df\|^{n-1}
    \right)^{ \frac{n}{n-1} },
\end{equation}
where $A_{n-1} = 1/( \omega_{n}^{ \frac{1}{n-1} } n^{ \frac{n}{n-1} } )$ is the constant in Almgren's isoperimetric inequality for integral currents \cite[Theorem 10]{Alm:86} where $\omega_n$ is the volume of the unit ball in $\mathbb{R}^n$. The integral over $Q( x_0, s )$ uses Lebesgue measure while the integral over $\partial Q( x_0, s )$ uses the surface measure. By Fubini's theorem, the set
\begin{equation*}
    \left\{
        r_0 < s < 2r_0
        \colon
        \int_{ \partial Q( x_0, s ) } \|Df\|^{n-1}
        \leq
        \frac{ 1 }{ r_0 }
        \int_{ 2Q \setminus Q }
            \|Df\|^{n-1}
    \right\}
\end{equation*}
has positive measure, so there is $s \in ( r_0, 2r_0 )$ such that \eqref{eq:integralaverage} holds and
\begin{equation*}
    \int_{ \partial Q( x_0, s ) } \|Df\|^{n-1}
    \leq
    \frac{ 1 }{ r_0 }
    \int_{ 2Q } \|Df\|^{n-1}.
\end{equation*}
Then \eqref{eq:integralaverage} and the distortion inequality imply that
\begin{equation*}
    \int_{ Q } \|Df\|^{n}
    \leq
    \int_{ Q( x_0, s) } \|Df\|^{n}
    \leq
    K A_{n-1}
    \left(
    \frac{ 1 }{ r_0 }
    \int_{ 2Q } \|Df\|^{n-1}
    \right)^{ \frac{n}{n-1} }.
\end{equation*}
Rearranging the inequality gives that
\begin{equation*}
    \aint{Q} \|Df\|^n
    \leq
    K A_{n-1} 2^{ \frac{n^2}{n-1} }
    \left(
        \aint{ 2Q } \|Df\|^{n-1}
    \right)^{ \frac{n}{n-1} }.
\end{equation*}
Since $2Q \subset \Omega$ was arbitrary, the Generalized Gehring lemma holds for the function $g = \|Df\|^{n-1}$. In particular, $g^{ 1 + \epsilon } \in L^{1}_{ loc }( \Omega )$ for $\epsilon = \epsilon( K, n ) > 0$; see \cite[Theorem 4.2]{Boj:Iwa:83} for an explicit estimate on the exponent $\epsilon$. The claim follows. 
\end{proof}

\begin{proof}[Proof of \Cref{thm:main:converse:intro}]
We consider a closed set $E \subset M$ removable for $W^{1,n}$-functions and a $K$-quasiregular $\omega$-curve $f \colon M \setminus E \to N$ having finite energy on precompact open sets of $M$. Recall that we are assuming that $\omega \in \Omega^{n}( N )$ is strongly nonvanishing, closed, and bounded.

Consider a compact exhaustion $( K_j )_{ j = 1 }^{ \infty }$ of $M$ for which the interior of $K_{j+1}$ contains $K_j$, and let $\phi_j \colon M \to [0,1]$ be a smooth map equal to one in $K_j$ and zero outside the interior of $K_{j+1}$ for $j \geq 1$. Next, consider an isometric embedding $\iota \colon N \xhookrightarrow{} L^{\infty}(N)$ and consider a Sobolev extension of $\phi_j ( \iota \circ f )$ for $j \geq 1$; the existence of the extension is a consequence of \Cref{prop:removability:mappings}. The pointwise limit of the sequence of extensions defines a Sobolev extension of $\iota \circ f$ and thus a Sobolev extension $F$ of $f$. The Sobolev extension $F$ has a $K$-quasiregular $\omega$-curve as a representative as follows by combining \Cref{prop:sectional:to:contractibility} with \Cref{thm:isoperimetric:to:continuity}; this is where we use the the sectional curvature upper bound and the injectivity radius lower bound on $N$. The first part of the claim follows.

Lastly, in case a $K$-quasiregular $\omega$-curve $f \colon M \setminus E \rightarrow N$ has a continuous extension $G \colon M \to N$ and $E$ satisfies the Hausdorff dimension bound in \Cref{thm:remov:bounded:curve}, we claim that $G$ is a $K$-quasiregular $\omega$-curve. A localization to charts yields the claim as follows. Consider $K' > K$ such that
\begin{equation}\label{eq:dimensionbound:proof}
    \mathrm{dim}_{ \mathcal{H} }( E )
    <
    \frac{ n }{ 1 + K' \lambda(n) \binom{m}{n} }
\end{equation}
and let $x_0 \in M$. Consider $R_0 > 0$ so small that there is a $(1+\epsilon)$-bi-Lipschitz diffeomorphism
\begin{equation*}
    \psi \colon B( G( x_0 ), R_0 ) \rightarrow B( 0, R_0 )
\end{equation*}
and that
\begin{equation*}
    \frac{1}{1+\epsilon} \|\omega\|( G( x_0 ) ) \leq \| \omega \|(x) \leq ( 1+\epsilon )\|\omega\|( G(x_0) )
    \quad\text{for every $x \in B( G( x_0 ), R_0 )$.}
\end{equation*}
Similarly, there is a small $r_0 > 0$ so that there is a $(1+\epsilon)$-bi-Lipschitz diffeomorphism
\begin{equation*}
    \phi \colon \overline{B}( x_0, r_0 ) \rightarrow \overline{B}( 0, r_0 )
\end{equation*}
between closed balls and $G( \overline{B}( x_0, r_0 ) ) \subset B( G( x_0 ), R_0 )$. We denote $H = \psi \circ G \circ \phi^{-1}$. Notice that $E' = \phi( \overline{B}( x_0, r_0 ) \cap E ) \subset \mathbb{R}^n$ is a closed set that satisfies the dimension bound \eqref{eq:dimensionbound:proof}.

Here $H|_{ B( 0, r_0) \setminus E' }$ is a $(1+\epsilon)^{4n}K$-quasiregular $\eta$-curve with respect to the differential form $\eta = ( \psi^{-1} )^{\star}\omega$. We argue as in the proof of \cite[Lemma 5.2]{Pan:20} to deduce that for small enough $r_0$, $H|_{ B( 0, r_0 ) \setminus E }$ is a $K'$-quasiregular $\eta_0$-curve with respect to the constant-coefficient differential form $\eta_0$: $\eta$ evaluated at $0$. The argument goes through because $H$ is continuous. Now \eqref{eq:dimensionbound:proof} and \Cref{thm:remov:bounded:curve} imply that $H|_{ B( 0, r_0 ) }$ is a $K'$-quasiregular $\eta_0$-curve. Then a simple covering argument implies that $G \in W^{1,n}_{loc}( M, N )$. Then $G$ being a $K$-quasiregular $\omega$-curve follows from $E$ having negligible measure.
\end{proof}
Theorems \ref{cor:quasiregular} and \ref{cor:pseudoholomorphic} are immediate corollaries of \Cref{thm:main:converse:intro}.
\begin{remark}
The reader observes that we only used the geometric assumptions on $N$ to deduce that the extension $F$ in the proof of \Cref{thm:main:converse:intro} has a continuous representative. It follows, therefore, that the conclusion of the claim holds under the assumptions of \Cref{thm:isoperimetric:to:continuity}. That is, if $N$ is complete and supports Euclidean isoperimetric inequalities for dimensions $n$ and $n-1$ up to some mass. Theorems \ref{cor:quasiregular} and \ref{cor:pseudoholomorphic} and the subsequent corollaries in \Cref{sec:geo:top:consi} can be extended in a similar manner.
\end{remark}

\begin{proof}[Proof of \Cref{cor:kunnethideal}]
We suppose that $f$ has finite energy and wish to show that $f$ is constant. By \Cref{thm:main:converse:intro}, there is a $K$-quasiregular $\omega$-curve $F \colon \mathbb{S}^n \rightarrow N$ extending $f$; it suffices to show that $F^{\star}\omega$ is weakly exact. Indeed, by the distortion inequality and Stokes' theorem,
\begin{equation*}
    \int_{\mathbb{S}^n} ( \|\omega\| \circ F ) \|DF\|^n \,d\mathcal{H}^n \leq K \int_{ \mathbb{S}^n } F^\star \omega = 0, \quad\text{cf. \cite[Section 2]{Haj:Iwa:Mal:Onn:08}}.
\end{equation*}
Hence $( \|\omega\| \circ F ) \|DF\|^n = 0$ $\mathcal{H}^{n}$-almost everywhere. As $\| \omega \| \circ F \neq 0$ everywhere, we deduce that $\|DF\|^n = 0$ $\mathcal{H}^{n}$-almost everywhere. Thus $F$ is constant.

To finish the proof, we use the definition of the Künneth ideal, and observe that
\begin{equation*}
    F^{\star}\omega
    =
    d
    \left(
        F^{\star}\eta
    \right)
    +
    \sum_{ i = 1 }^{ l } F^{\star}\alpha_i \wedge F^{\star}\beta_i
    \quad\text{in the weak sense.}
\end{equation*}
Here $F^{\star}\alpha_i$ and $F^{\star}\beta_i$ are $(n/n_i)$- and $(n/(n-n_i))$-integrable, respectively, and weakly closed. We recall that the conformal cohomology of $\mathbb{S}^n$ is trivial between dimensions $1$ and $n-1$ (see \cite[Section 3.2]{Kan:Pan:19} for the definition and \cite[Theorem 4.1]{Kan:Pan:19} for the equivalence statement with the usual cohomology). Hence, if $n_i > 1$, there is an $(n_i-1)$-form $\sigma_i$ that is $(n/(n_i-1))$-integrable and whose weak differential is $F^\star \alpha_i$. For $n_i = 1$, the same conclusion holds but $\sigma_i \in L^{p}( \mathbb{S}^n )$ for every $1 \leq p < \infty$. Either way, $F^{\star}\alpha_i \wedge F^{\star}\beta_i = d( \sigma_i \wedge F^{\star}\beta_i )$ in the weak sense, so $F^{\star}\omega = d( F^{\star}\eta + \sum_{ i = 1 }^{ l } \sigma_i \wedge F^{\star}\beta_i )$ in the weak sense and thus $F^{\star}\omega$ is weakly exact.
\end{proof}
\begin{proof}[Proof of \Cref{cor:kunnethideal:topological}]
The conclusion is simply a reformulation of \Cref{cor:kunnethideal}.
\end{proof}

\section*{Declarations}
\subsection*{Conflict of interest} The author declares that they have no conflict of interest.

\subsection*{Financial support}
This work was supported by the Academy of Finland, project number 332671. Views and opinions expressed are those of the author only and do not necessarily reflect those of the Academy of Finland or the other funding organizations.

\subsection*{Data}
Data sharing not applicable to this article as no datasets were generated or analysed during the current study.

\newcommand{\etalchar}[1]{$^{#1}$}

\end{document}